\newcommand{\R}{\mathbb{R}}
\newtheorem{proposition}{Proposition}[section]
\newtheorem{lemma}[proposition]{Lemma}
\newtheorem{theorem}[proposition]{Theorem}
\newtheorem{remark}[proposition]{Remark}
\newcommand{\END}{\hfill$\Box$}
\newenvironment{proof}{{\it Proof}. }{\hfill\END\\[0.5ex]}
\newcommand{\Si}{\mathop{\rm Si}}
\newcommand{\Ci}{\mathop{\rm Ci}}
\numberwithin{equation}{section}
\begin{document}
\title{ Filon-Clenshaw-Curtis rules for a class of highly-oscillatory integrals with
logarithmic singularities}

\author{V. Dom\'{\i}nguez\thanks{
Dep. Ingenier\'{\i}a Matem\'{a}tica e Inform\'{a}tica,
E.T.S.I.I.T.
Universidad P\'{u}blica de Navarra.
Campus de Tudela
31500 - Tudela (SPAIN), {email}:{\tt victor.dominguez@unavarra.es}} }
\maketitle

\begin{abstract}
In this work we propose and analyse a numerical method for computing a family of
highly oscillatory integrals with logarithmic singularities. For these 
quadrature rules we derive error estimates in terms of $N$, the number of nodes,
$k$ the rate of
oscillations and  a Sobolev-like regularity of the function. We prove 
that the method is not only robust but the error even decreases, for fixed $N$, 
as
$k$ increases. Practical issues
about the implementation of the rule are also covered in this paper by: (a)
writing down ready-to-implement algorithms; (b) analysing the numerical
stability of the computations and (c) estimating the overall computational 
cost. We finish by showing some
numerical experiments which illustrate the theoretical results presented in
this paper.
\end{abstract}
\paragraph{Keywords} numerical integration; highly oscillatory integrals; Clenshaw--Curtis 
rules, Chebyshev polynomials, logarithmic singularities

\paragraph{MSC}65D30,  42A15,65Y20.

\section{Introduction}
This paper  concerns itself with the approximation of 
\begin{equation}
\label{eq:defI}
{\cal I}^\alpha_k(f):= \int_{-1}^1  f(x)\,\log((x-\alpha)^2)\,{\exp(ik
x})\,{\rm d}x
\end{equation}
where $\alpha\in[-1,1]$. For the sake of simplicity, we will assume 
throughout this paper  that $k\ge 0$, although the algorithm and the theoretical
results can be straightforwardly adapted for
$k\le 0$. 

Our aim is to design numerical methods whose
rates of convergence do not depend on $k$ but only on $f$ and the number of
nodes of the quadrature rules. No information  about the derivatives, which
is very common in the approximation of oscillatory integrals (see
\cite{HuVa:06} or \cite{IsNo:05} and references therein), will be used, which
results in a simpler and less restrictive method. At first
sight,  $\alpha\in\{-1,0,1\}$ could be the more common cases  but
since the analysis we  develop here is actually valid for any
$\alpha\in[-1,1]$, we cover the general case in this paper.  

We choose in this work the Clenshaw-Curtis approach:
\begin{equation}
\label{eq:the:rule:k}
{\cal I}^\alpha_{k,N}(f):= \int_{-1}^1  {\cal Q}_N
f(x)\,\log((x-\alpha)^2)\,{\exp(ik x})\,{\rm d}x\approx {\cal I}^\alpha_{k}(f),\
\end{equation}
where  
\begin{equation}
 \label{eq:QN}
 \mathbb{P}_N\ni {\cal Q}_N f, \quad\text{s.t.}\quad \big( {\cal Q}_N
f\big)(\cos(n\pi/N))=f(\cos(n\pi/N)),\qquad n=0,\ldots,N. 
\end{equation}
In other words, ${\cal Q}_N f$ is the  polynomial of degree $N$ which
interpolates  $f$
at Chebyshev nodes. 

Classical, and modified, Clenshaw-Curtis rules 
\eqref{eq:the:rule:k} enjoy  very good properties which have made them very
popular in the scientific literature cf.
\cite{ClCu:60, Pi:2000, Quadpack:1983, SlSm:1980,MR821902} and been 
considered competitive  with respect to Gaussian rules even for smooth 
integrands (we refer to 
\cite{MR2403058} for an interesting discussion about this fact).
First of 
all, the error of the rule is, in the worst case, like the error of the 
interpolating polynomial in $L_1(-1,1)$. Thus, the rule is robust
respect to $k$ and it inherits the excellent approximation properties of the 
interpolant. On the other hand, and from a more practical view,
nested  grids can be used in the computations. Hence, if ${\cal 
I}_{k,N}(f)$ has been already computed, $ {\cal I}_{k,2N}(f)$  only requires 
$N$ new evaluations of $f$, i.e. previous calculations can be reused.
Moreover, by comparing both
approximations, a-posteriori error estimate is at our disposal almost for free.
Finally, ${\cal Q}_Nf$ can be expressed in the Chebyshev basis very fast,  in 
about ${\cal O}(N\log N)$ operations, using FFT techniques.

If $k=0$, or if $k$ is small enough ($k\le 2$ has been used throughout  this
paper), the
complex exponential can be incorporated to the definition of $f$. This leads us
to consider, 
in the same spirit, the following integral and numerical approximation,
\begin{equation}
\label{eq:the:rule:k:k0}
{\cal I}_{0}^\alpha(f):= \int_{-1}^1  f(x)\,\log((x-\alpha)^2)\,{\rm
d}x\approx \int_{-1}^1  {\cal Q}_Nf(x)\,\log((x-\alpha)^2)\,{\rm
d}x=:
{\cal I}^\alpha_{0,N}(f). 
\end{equation}
This problem is also dealt with in this work since the combination of both
algorithms gives rise to a  method which can be applied to non-, mildly
and highly oscillatory integrals.  

For these rule we will show that the rule converges superalgebraically for 
smooth functions $f$. Moreover, the error is not only not deteriorated as $k$ 
increases but it even decreases as $k^{-1}$ as $k\to \infty$. Furthermore, for 
some particular values of $\alpha$, which include the more common 
choices $\alpha\in \{-1,0,1\}$,  the error decay faster, as $k^{-2}$,  which 
means that both, the absolute and relative error of the rule decreases )cf. 
Theorem \ref{theo:Conv}).

The implementation of the rule hinges on finding a way to compute, fast and
accurately, the weights 
\begin{eqnarray}
\label{eq:omegank}
 \xi_n^\alpha(k)&:=&\int_{-1}^1 T_n(x)\log((x-\alpha)^2)\,\exp(ikx)\,{\rm
d}x,\quad k>2,\\
\xi_n^\alpha&:=&\xi^\alpha_n(0)=\int_{-1}^1 T_n(x)\,\log((x-\alpha)^2)\,{\rm d}x
\label{eq:omegank:k0}
\end{eqnarray}
($T_n(x):=\cos(n\arccos x)$ is the Chebyshev polynomial of the first kind)
for $n=0,1,\ldots,N$.
The second set of coefficients $(\xi^\alpha_n)_n$ is computed  by using  a
three-term 
recurrence relation which we show to be stable. For the first
set, $(\xi^\alpha_n(k))_n$, the situation is  more delicate. First we derive a
new three-term linear recurrence which can be used to evaluate
$\xi^\alpha_n(k)$. The calculations, however, turn out to be stable only for
$n\le k$. This could be understood, somehow, as  consequence of 
potentially handling  two  different  sources of  oscillations in 
$\xi^\alpha_n(k)$. The most obvious is  that coming from the complex
exponential, which is fixed independent of $n$. However, when $n$ is large, the 
Chebyshev polynomials, like the classical orthogonal polynomials, have all their
roots  in $[-1,1]$. This results in a increasing oscillatory behaviour of the
polynomial as $n\to\infty$. As long as the first oscillations source dominates
the second one, i.e.  as $k>n$, the recurrence is stable: any perturbation
introduced in the  computation is amplified very little. However, when 
$n>k$ increases,  such perturbations are hugely magnified,  which  
makes this approach completely useless. Of course, if $k$ is large,  so
should be $N$ to find these instabilities. Hence, this
only causes difficulties for practical computations  in the middle range, that
is, when $k$ is not yet very large but we need to use a relatively large number
of points to evaluate the integral within the prescribed precision.

This phenomenon is not new: It has been
already observed, among other examples, when computing  the simpler integral
\[
 \int_{-1}^1 T_n(x)\exp(ikx)\,{\rm d}x.
\]
(See  \cite{DoGrSm:2010} and references therein). Actually, the  problem is
circumvented using the same idea, the so-called Oliver method (cf.
\cite{Oliver1966}) which consists in rewriting appropriately the
difference equation  used before now as a
tridiagonal linear system whose (unique) solution gives the sought coefficients
except the last one which is part now of the right-hand-side.  Therefore, the 
evaluation
of this last coefficient has to be carried out in a different way. Thus, we
make use of an
asymptotic argument, namely the Jacobi-Anger expansion, which expresses
$\xi_N^\alpha(k)$  as a series whose terms are a product of Bessel functions and
integrals as in \eqref{eq:omegank:k0}. Despite the fact that it could
seem at first sight, the
series can also be summed in about ${\cal O}(N)$ operations. The resulting
algorithm has a cost ${\cal O}(N\log N)$, cost which
is lead by the FFT method used in the construction of the interpolant ${\cal
Q}_Nf$.

Let us point out that the case of $\alpha=0$, for both the oscillatory and
non-oscillatory case, has been previously considered  in \cite{BrHa:2007}  using
a different strategy. Roughly speaking, it relies on using the asymptotic
Jacobi-Anger expansion for all the coefficients, no matter how large  $k$ is
respect to $n$.  Our approach is, in our opinion, more optimal since the
algorithm is simpler to implement and the computational cost is smaller.

The interest in designing efficient methods for  approximating 
oscillatory integrals has been increased
in the last years,  fueled by new problems like high  frequency  scattering 
simulations  cf. \cite{BrHa:2007,
HuVa:07, DoGrSm:2010}.  For instance, in the boundary integral method, the
assembly of the matrix of the systems requires computing highly oscillatory 
integrals which are
smooth
except on the diagonal.  Hence,
after appropriate change of variables,  we can reduce the problem to evaluate
\[
\int_0^{1} f(s)\exp(ik s)\,{\rm d}s.
\]	
Typically, $f$ is smooth except at the end-points where an integrable
singularity, which could be either in the original integral or introduced in the
change of variables, occurs. Actually, the log-singularity is very common since
one can find it in the fundamental solutions for many differential operators in
2D, for instance, in the Helmholtz equation. 

Different strategies have been suggested for computing oscillatory integrals.
For instance steepest descent methods, based on analytic continuation in the
complex plane \cite{HuVa:06} or Levin methods which reduces the problem to
solving ODE by collocation methods \cite{Lev:97,Ol:2010}. On the other hand, we 
find
Filon rules which consists in interpolating the function by a (piecewise)
polynomial. Therefore, our method can be characterised as a Filon rule. The
general case for smooth functions has been considered eg. in
\cite{Is:04,Is:05,IsNo:05,Me:09,Xi:2007a,Xi:2007b}. Provided that the new
integral with the interpolating polynomial replacing the original function can 
be
computed exactly, a robust method is obtained in the sense that it converges as
the
size of the subintervals shrink to zero. Depending on the choice of the nodes we
have Filon-Clenshaw-Curtis rules,   Filon-Gaussian rules or,
if the derivatives, usually at the end points, are also interpolated,
Filon-Hermite rules. Oscillatory integrals with algebraic singularities in the 
integrand, and more general oscillators,  have been considered in 
\cite{MoXi:2011,KaXi:2013}. A different approach was considered in 
\cite{1207.2283} where the use of graded meshes toward the singularities 
has shown to be also efficient. Let us 
point out that this last example gives another example of the importance of 
having robust methods, which covers all possible values of $k$, since graded 
meshes can easily have very small subintervals so that the oscillations 
are reduced or even disappear. We would like to finish this introduction by
mentioning two recent works on this topic. 
First, in 
\cite{Sha:2013} a robust Matlab implementation of Filon-Gauss-Legendre rules is presented. That is, 
quadrature rules which
are based on integrating the interpolating polynomial on the Gauss-Legendre nodes. On the other hand, 
and for 3D geometries, we cite 
\cite{DoGa:2013} for a recent attempt to extend  Filon-Clenshaw-Curtis rules to computing 
highly oscillatory integrals
on the unit sphere.

This paper is structured as it follows: In section 2 we derive the error 
estimates
for the quadrature rule. In section 3 we deduce the algorithms to evaluate
the coefficients  \eqref{eq:omegank} and \eqref{eq:omegank:k0}. The stability of
such evaluations is analysed in detail in section 4. Some 
numerical experiments are presented in section 5, demonstrating the results
proven in this
work. In the appendix we collect those properties of Chebyshev polynomials used
in this paper. 

\section{Error estimates for the Product Clenshaw-Curtis rule}

The aim of this section is to derive convergence estimates for the error of the
quadrature rule \eqref{eq:the:rule:k}-\eqref{eq:the:rule:k:k0}. Obviously, 
\[
{\cal I}_{k,N}^\alpha- {\cal I}_k^\alpha(f)=\int_{-1}^1 E_N(x) \log \big(
(x-\alpha)^2\big)\,\exp(ikx)\,{\rm
d}x
\]
where
\begin{equation}
 \label{eq:def:E}
E_N:={\cal Q}_Nf-f, 
\end{equation} 
and  ${\cal Q}_N f$ is the interpolating polynomial at Chebyshev nodes cf.
\eqref{eq:QN}. 

A very popular technique when working with  Chebyshev polynomial approximations  is
to perform the change of 
variable $x=\cos\theta$. This transfers the problem  to the
frame of even  periodic functions and their approximations by trigonometric 
polynomials. Hence, if we denote  
$f_c(\theta):=f(\cos\theta)$ (note that $f_c$ is now even and $2\pi$-periodic)
we have that
\[
{\rm span}\: \langle\cos n\theta\ :\ n=0,\ldots N\rangle \ni\big({\cal Q}_N f 
\big)_c,\qquad 
\big({\cal Q}_N f \big)_c(n\pi/N)=f_c(n\pi/N),\quad n=0,\ldots,N. 
\]

Let us denote by $H^r(I)$  the classical  Sobolev space of order $r$ on
an interval $I\subset\R$ and define
\[
 H^r_\#:=\big\{\varphi\in H^r_{\rm loc}(\R)\: |\:
\varphi=\varphi(2\pi+\cdot)\big\}.
\]
{($H^r_{\rm loc}(\R)$ denotes here the space of functions which are locally in 
$H^r(\mathbb{R})$).}
The norm of these spaces can be characterised  in terms of the Fourier
coefficients of the elements as follows 
\begin{equation}
\label{eq:norm}
 \|\varphi\|^2_{H^r_\#}:=|\widehat{\varphi}(0)|^2+\sum_{n\ne 0}|n|^{2r}
|\widehat{\varphi}(n)|^2,\qquad
\widehat{\varphi}(n):=\frac{1}{2\pi}\int_{-\pi}
^\pi\varphi(\theta)\exp(-in\theta)\,
{\rm d}\theta. 
\end{equation}
If $r=0$, we just have the $L_2(-\pi,\pi)$ norm, whereas for { a positive integer $r$}
an equivalent norm is given by
\[
\bigg[ \int_{-\pi}^\pi |\varphi(\theta)|^2\, {\rm d }\theta+
\int_{-\pi}^\pi |\varphi^{(r)}(\theta)|^2\, {\rm d }\theta\bigg]^{1/2}.
\]

The convergence estimates for the trigonometric
interpolant in Sobolev
spaces (see for instance
\cite[\S 8.3]{SaVa:2002}) can be straightforwardly adapted to prove that
\begin{equation}
\label{eq:interpolatingEstimate}
\|(E_N\big)_c\|_{H^s_{\#}}\le C_{s,r_0}
N^{s-r}\|f_c\|_{H^r_\#}
\end{equation}
where  $r\ge s\ge 0$ with $r\ge r_0>1/2$ and $C_{s,r_0}$ 
independent of  $f$, $N$ and $r$. 

Set $w(x):=(1-x^2)^{1/2}$ and define for
$\beta\in\{-1,1\}$
\begin{eqnarray*}
 \|f\|_{\beta,w}&:=&\|f w^{\beta/2}\|_{L_2(-1,1)}=\bigg[\int_{-1}^1
|f(x)|^2  (1-x^2)^{\beta/2} {\rm d}x  \bigg]^{1/2}. 
\end{eqnarray*}
Notice in pass
\begin{equation}
 \label{eq:CSch}
\bigg|\int f(x)g(x)\,{\rm d}x\bigg|\le \|f\|_{-1,w}\|g\|_{1,w},\quad
\|g\|_{1,w}\le \sqrt{\pi}\|g\|_{L_\infty(-1,1)}. 
\end{equation}

From the relations  
\[
 \|f\|_{-1,w}=\|f_c\|_{L_2(0,\pi)},\qquad 
\|f'\|_{1,w}=\|(f_c)'\|_{L_2(0,\pi)},
\]
estimates \eqref{eq:interpolatingEstimate}, and the Sobolev embedding 
theorem \cite[Lemma 5.3.3]{SaVa:2002}, 
we can  easily derive the following estimate: For any $r\ge 1$, 
\begin{equation}
\label{eq:interp:estimates}
\|E_N\|_{-1,w}+ N^{-1}\|E_N'\|_{1,w}
+N^{ -1/2-\varepsilon}\|E_N\|_{L_\infty(-1,1)}
\le C_{\varepsilon} N^{-r}\|f_{c}\|_{H^{r}_\#},
\end{equation}
with $C_\varepsilon$ depending
only on $\varepsilon>0$. 

To prove the main result of this section  we previously need 
some technical results we collect in the next three Lemmas. The first
result concerns the asymptotics of 
\[
 \xi_0^\alpha(k)=\int_{-1}^1\exp(ikx)\log((x-\alpha)^2)\,{\rm d}x
\]
as $k\to\infty$.

\begin{lemma} \label{lemma:aux:conver2} For all $\alpha\in(-1,1)$ there exists
$C_\alpha>0$ so that for all $k\ge 2 $
 \[ 
|\xi_0^\alpha(k)|\le C_\alpha (1+{|\log(1-\alpha^2)|}) k^{-1}. 
 \]
Moreover, for $\alpha=\pm 1$, 
\[
 |\xi_0^{\pm 1}(k)|\le C_1 k^{-1}\log k,
\]
with $C_1>0$ independent of  $k\ge 2$. 
\end{lemma}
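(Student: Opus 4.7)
The plan is to reduce $\xi_0^\alpha(k)$ to two instances of the canonical oscillatory--logarithmic integral
\[
J(a,k):=\int_0^a \log y\cdot\exp(iky)\,dy,\qquad a>0,\ k>0,
\]
evaluate $J(a,k)$ in closed form in terms of $\Ci$ and $\Si$, and then exploit a cancellation between the two pieces that removes a dominant $(\log k)/k$ contribution whenever $\alpha\in(-1,1)$. Splitting $[-1,1]$ at $x=\alpha$ and using the substitutions $y=\pm(x-\alpha)$ on each subinterval yields
\[
\xi_0^\alpha(k) \;=\; 2\exp(ik\alpha)\bigl[J(1-\alpha,k) + \overline{J(1+\alpha,k)}\,\bigr].
\]
For $\alpha=\pm1$ one of the two intervals degenerates to a point, so a single copy of $J$ is all that remains, and this absence of a partner is precisely what will force the $\log k$ factor to survive in the endpoint case.

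To evaluate $J(a,k)$ I first substitute $t=ky$ to isolate the factor $\log k$, obtaining $J(a,k)=k^{-1}\int_0^{ka}\log t\cdot\exp(it)\,dt - (\log k/k)\,(\exp(ika)-1)/i$. The inner integral is handled by integration by parts on $[\varepsilon,ka]$: the boundary term at $\varepsilon$ behaves like $-i\log\varepsilon$, while the cosine-integral part of $\int_\varepsilon^{ka}\exp(it)/t\,dt$ contributes $+i\log\varepsilon$ via $\Ci(\varepsilon)\sim\gamma+\log\varepsilon$, and these two logarithmic divergences cancel as $\varepsilon\to 0^+$. The surviving terms give the closed form
\[
J(a,k) \;=\; \frac{1}{k}\bigl[-i\log(a)\exp(ika)+i\,\Ci(ka)-\Si(ka)-i\gamma-i\log k\bigr].
\]

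Adding $J(1-\alpha,k)$ to $\overline{J(1+\alpha,k)}$, the constant-in-$a$ pieces $\pm i\gamma/k$ and, crucially, $\pm i\log k/k$ cancel in pairs, leaving (up to the prefactor $2\exp(ik\alpha)/k$) a linear combination of $\log(1\pm\alpha)$, $\Ci(k(1\pm\alpha))$ and $\Si(k(1\pm\alpha))$. The $\Si$ terms are universally bounded; the elementary estimate $|\log(1\pm\alpha)|\le C+|\log(1-\alpha^2)|$ on $\alpha\in(-1,1)$ handles the logarithmic boundary contributions. For the $\Ci$ terms I distinguish whether $k(1\mp\alpha)$ is above or below $1$: if above, $|\Ci|$ is bounded; if below (which can only occur for $\alpha$ close to $\pm1$), the bound $|\Ci(x)|\le C(1+|\log x|)$ combined with the algebraic identity $|\log(k(1-\alpha))|=|\log(1-\alpha)|-\log k$, valid precisely when $k(1-\alpha)<1$, shows that the parasitic $\log k$ does not reappear. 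Putting these together yields $|\xi_0^\alpha(k)|\le C_\alpha(1+|\log(1-\alpha^2)|)k^{-1}$.

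For $\alpha=\pm 1$ only one summand survives (the other has $a=0$ and vanishes), so $\pm i\log k/k$ has no partner to annihilate it and the formula for $J$ immediately gives $|\xi_0^{\pm 1}(k)|\le Ck^{-1}\log k$ for $k\ge 2$. The principal technical obstacle is the rigorous derivation of the closed form for $J(a,k)$: the logarithmic endpoint at $y=0$ means that the integration by parts is formally illegal, and one must verify the cancellation of the two $\log\varepsilon$ divergences carefully; everything beyond that is algebraic bookkeeping followed by elementary bounds on $\Ci$ and $\Si$.
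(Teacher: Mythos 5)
Your proposal is correct and follows essentially the same route as the paper, which proves the lemma by appealing to the explicit closed-form expression for $\xi_0^\alpha(k)$ in terms of $\Si$ and $\Ci$ (equations \eqref{eq:etaalpha_01}--\eqref{eq:eta_01}) and the boundedness/asymptotics of those functions; your sum $2\exp(ik\alpha)[J(1-\alpha,k)+\overline{J(1+\alpha,k)}]$ reproduces exactly those formulas, including the cancellation of the $\gamma$ and $\log k$ terms for $\alpha\in(-1,1)$ and their survival at $\alpha=\pm1$. The only difference is that you supply the full derivation and the careful small-argument treatment of $\Ci$, which the paper omits "for the sake of brevity."
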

\begin{proof}  The result follows from working on the explicit expression for
$\xi_0^\alpha(k)$, see \eqref{eq:etaalpha_01}-\eqref{eq:eta_01}, and from using 
the limit of the functions involved as $k\to\infty$. We omit the proof for the
sake of brevity.

\end{proof}

The next Lemma complements the estimates given in \eqref{eq:interp:estimates}.

\begin{lemma}\label{lemma:aux:conver3}
 Let $E_N$ be given in \eqref{eq:def:E} and 
\begin{equation}\label{eq:00:lemma:aux:conver3}
e^\alpha_N(x):=\frac{E_N(x)-E_N(\alpha)}{x-\alpha}. 
\end{equation}
Then for all $r\ge s_0>5/2$, there exists $C_{s_0}$ independent of $f$, $N$ and
$r$ so that
\begin{equation}\label{eq:01:lemma:aux:conver3}
\|E_N'\|_{L_\infty(-1,1)}+ \|e^\alpha_N\|_{L_\infty(-1,1)}\le C_{s_0}N^{s_0-r}
\|f_c\|_{H^r_{\#}}.
\end{equation}
Moreover,
\begin{equation}\label{eq:02:lemma:aux:conver3}
\|wE_N''\|_{L_\infty(-1,1)} +
\|w(e^\alpha_{N})'\|_{L_\infty(-1,1)}\le C_{s_1}N^{s_1-r}
\|f_c\|_{H^r_{\#}},
\end{equation}
for $r\ge s_1>7/2$, with $C_{s_1}$ independent also of $f$, $N$ and $r$.
\end{lemma}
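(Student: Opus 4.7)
The plan is to pass to the trigonometric setting by writing $g := (E_N)_c$, which is even and $2\pi$-periodic with $g'(0) = g'(\pi) = 0$, and to exploit its Fourier/Chebyshev expansion together with the interpolation bound \eqref{eq:interpolatingEstimate}. The chain rule gives the two identities
\[
 E_N'(\cos\theta) = -\frac{g'(\theta)}{\sin\theta}, \qquad
 (wE_N'')(\cos\theta) = \sin\theta\,E_N''(\cos\theta) = \frac{d}{d\theta}\!\left(\frac{g'(\theta)}{\sin\theta}\right),
\]
both of which extend continuously to $[0,\pi]$ thanks to the evenness of $g$.

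For \eqref{eq:01:lemma:aux:conver3}, I would expand $g$ in a cosine series and use $\sin(k\theta)/\sin\theta = U_{k-1}(\cos\theta)$ to rewrite $E_N'$ as $E_N'(x) = 2\sum_{k\ge 1} k\,\widehat{g}(k)\,U_{k-1}(x)$. The triangle inequality together with $\|U_{k-1}\|_{L_\infty[-1,1]} = k$ gives $\|E_N'\|_{L_\infty} \le 2\sum_{k\ge 1} k^2\,|\widehat{g}(k)|$, and Cauchy--Schwarz in $k$ bounds this further by $C_{s_0}\|g\|_{H^{s_0}_\#}$ for any $s_0 > 5/2$ (the condition coming from the summability of $\sum_k k^{4-2s_0}$). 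The mean value theorem immediately yields $\|e^\alpha_N\|_{L_\infty} \le \|E_N'\|_{L_\infty}$, uniformly in $\alpha$, and combining with \eqref{eq:interpolatingEstimate} produces \eqref{eq:01:lemma:aux:conver3}.

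For the $wE_N''$ estimate in \eqref{eq:02:lemma:aux:conver3}, differentiating the series for $E_N'$ in $\theta$ and using the second identity above gives $(wE_N'')(\cos\theta) = 2\sum_{k\ge 1} k\,\widehat{g}(k)\,\sin\theta\,U_{k-1}'(\cos\theta)$. Here I would invoke the classical Bernstein inequality $\|wp'\|_{L_\infty[-1,1]} \le n\|p\|_{L_\infty}$ for polynomials $p$ of degree $n$, applied to $p = U_{k-1}$, which gives $\|\sin\theta\,U_{k-1}'(\cos\theta)\|_{L_\infty(0,\pi)} \le k(k-1)$. A second Cauchy--Schwarz then yields $\|wE_N''\|_{L_\infty} \le 2\sum_{k\ge 1} k^3\,|\widehat{g}(k)| \le C_{s_1}\|g\|_{H^{s_1}_\#}$ for any $s_1 > 7/2$, and \eqref{eq:interpolatingEstimate} closes the bound.

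For the remaining estimate on $w(e^\alpha_N)'$, I would use the integral representation
\[
 e^\alpha_N(x) = \int_0^1 E_N'\bigl(\alpha + s(x-\alpha)\bigr)\,ds, \qquad
 (e^\alpha_N)'(x) = \int_0^1 s\,E_N''\bigl(\alpha + s(x-\alpha)\bigr)\,ds,
\]
combined with the concavity of $w$ on $[-1,1]$, which implies $w(\alpha + s(x-\alpha)) \ge (1-s)w(\alpha) + s\,w(x) \ge s\,w(x)$. Bringing $w(x)$ inside the integral yields $|w(x)(e^\alpha_N)'(x)| \le \int_0^1 |w(\alpha+s(x-\alpha))\,E_N''(\alpha+s(x-\alpha))|\,ds \le \|wE_N''\|_{L_\infty}$, uniformly in $\alpha \in [-1,1]$. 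The main technical point in the whole argument is the treatment of the weight $\sin\theta$ in the estimate for $wE_N''$: the identification with $\frac{d}{d\theta}(g'/\sin\theta)$ and the weighted Bernstein inequality for $U_{k-1}'$ are the essential ingredients, while the concavity trick then transfers the bound from $wE_N''$ to $w(e^\alpha_N)'$ at no extra cost.
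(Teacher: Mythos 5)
Your argument is correct and reaches the same thresholds $s_0>5/2$ and $s_1>7/2$, but it takes a genuinely different route for the divided-difference terms. For $E_N'$ and $wE_N''$ you do essentially what the paper does: expand $(E_N)_c$ in a cosine series, bound each term by $\|T_k'\|_{L_\infty}=k^2$, respectively by a cubic bound on $\|wT_k''\|_{L_\infty}$ (your weighted Bernstein estimate $\|wU_{k-1}'\|_{L_\infty}\le k(k-1)$ is exactly the paper's $\|wT_k''\|_{L_\infty}\le Ck^3$ in view of $T_k''=kU_{k-1}'$), and close with Cauchy--Schwarz and \eqref{eq:interpolatingEstimate}. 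The divergence is in how $e_N^\alpha$ and $w(e_N^\alpha)'$ are handled: the paper argues termwise, introducing $p_n^\alpha(x)=(T_n(x)-T_n(\alpha))/(x-\alpha)$, bounding $\|p_n^\alpha\|_{L_\infty}\le n^2$ by the mean value theorem and computing $\|w(p_n^\alpha)'\|_{L_\infty}\le n^3/3$ explicitly from the product identity \eqref{eq:Tn-Tn}, then resumming the Chebyshev series. You instead use the representation $e_N^\alpha(x)=\int_0^1 E_N'(\alpha+s(x-\alpha))\,{\rm d}s$ and its $x$-derivative, together with the concavity of $w$ (which gives $w(\alpha+s(x-\alpha))\ge s\,w(x)$, valid uniformly in $\alpha\in[-1,1]$ since $w\ge 0$), to reduce both divided-difference bounds directly to the already-established bounds on $\|E_N'\|_{L_\infty}$ and $\|wE_N''\|_{L_\infty}$. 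This buys a cleaner proof: it bypasses \eqref{eq:Tn-Tn} entirely and makes transparent why the divided difference inherits the derivative's bounds at no extra cost in powers of $N$; the paper's termwise route, by contrast, stays entirely within the Chebyshev calculus collected in its appendix. All the individual ingredients you invoke (the Bernstein inequality $\|wp'\|_{L_\infty}\le n\|p\|_{L_\infty}$ for $p\in\mathbb{P}_n$, the concavity of $w$, and the summability conditions $4-2s_0<-1$ and $6-2s_1<-1$) check out.
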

\begin{proof}  
Recall cf. \eqref{eq:Tnbounded}--\eqref{eq:Tnbounded:02}
 \begin{equation}\label{eq:04:lemma:aux:conver3}
  \| T_n'\|_{L_\infty(-1,1)}=n^2,\quad
\|wT_n'\|_{L_\infty(-1,1)}=n,\qquad 
\|wT_n''\|_{L_\infty(-1,1)}\le Cn^{3}
 \end{equation}
where $C>0$ is independent of $n$. 
Define now, 
\[
 p^\alpha_n(x):=\frac{T_{n}(x)-T_n(\alpha)}{x-\alpha}\in\mathbb{P}_{n-1}.
\]
Obviously 
 \begin{equation}\label{eq:05:lemma:aux:conver3}
  \|p_n^\alpha\|_{L_\infty(-1,1)}\le \|T_n'\|_{L_\infty(-1,1)}\le  n^2.
 \end{equation}
Besides, from \eqref{eq:Tn-Tn} (note that in the notation used there,
$U_j=T_{j+1}'/(j+1)$), we derive
\[
 (p_n^\alpha)'(x)=2\sum_{j=0}^{n-2} (j+1)^{-1} T'_{j+1}(\alpha)T_{n-1-j}'(x).
\] 
Then, using \eqref{eq:04:lemma:aux:conver3}
\begin{eqnarray}
 \|w (p_n^\alpha)'\|_{L_\infty(-1,1)}&\le& 2\sum_{j=0}^{n-2}
| (j+1)^{-1} T'_{j+1}(\alpha)|\:\|w
T_{n-1-j}'\|_{L_\infty(-1,1)}\nonumber\\
&=&
2\sum_{j=0}^{n-2}
  (j+1) (n-1-j)=\frac{1}{3} (n-1) n
(n+1)<\frac{n^3}{3}.\label{eq:06:lemma:aux:conver3}
\end{eqnarray}

On the other hand,  for all $f$ smooth enough, 
\begin{equation}
 \label{eq:ChebyshevSeries}
 f = \widehat{f_c} (0)+ 2\sum_{n=1}^\infty
\widehat{f_c}(n)T_n, \quad \widehat{f_c}(n)=\frac{1}{2\pi}\int_{-\pi}^\pi
\widehat{f_c}(\theta)\exp(i n\theta)\,{\rm
d}\theta=\frac{1}{\pi}\int_{-1}^1
 f  (x) T_n(x)\,{\rm d}x,
\end{equation}
{and, from \eqref{eq:norm}, 
\begin{equation}\label{eq:new}
 \|f_c\|^2_{H_{\#}^{r}}=|\widehat{f_c} (0)|^2+ 2\sum_{n=1}^\infty |\widehat{f_c} (n)|^2|n|^2
\end{equation}
}
To prove \eqref{eq:01:lemma:aux:conver3}, we  recall first the definition of 
$e_N^\alpha$ in \eqref{eq:00:lemma:aux:conver3}, and combine
\eqref{eq:ChebyshevSeries},  \eqref{eq:05:lemma:aux:conver3},
\eqref{eq:04:lemma:aux:conver3} {and \eqref{eq:new}}
to obtain
\begin{eqnarray*}
 \|e_N^\alpha\|_{L_\infty(-1,1)}+\|E_N' \|_{L_\infty(-1,1)}
\!\!&=&   {2}\Big\| \sum^\infty_{{n=1}}\!
\widehat{(E_N)_c}(n)
p_n^\alpha\Big\|_{L_\infty(-1,1)}+  {2}\Big\| \sum_{{n=1}}^\infty\!
\widehat{(E_N)_c}(n) T_n'\Big\|_{L_\infty(-1,1)}\\
\!\!&\le& {2}\sum_{n=1}^\infty\!
|\widehat{(E_N)_c}(n)|\big(
 \|p_n^\alpha\|_{L_\infty(-1,1)}+\|T_n'\|_{L_\infty(-1,1)}\big)\\
&\le&2\bigg[\sum_{n=1}^\infty  
n^{-1-2\epsilon }\bigg]^{1/2}
\bigg[{2}\sum_{n=1}^\infty
|\widehat{(E_N)_c}(n)|^2 n^{5+2\epsilon }\bigg]^{1/2}\\
&=&{2\bigg[\sum_{n=1}^\infty  
n^{-1-2\epsilon }\bigg]^{1/2}\big\|\big(E_N\big)_c\big\|_{H^{5/2+\epsilon}_\#}}
=:C_\varepsilon\big\|\big(E_N\big)_c\big\|_{H^{5/2+\epsilon}_\#}.
\end{eqnarray*}  
Estimate \eqref{eq:interpolatingEstimate} proves
\eqref{eq:01:lemma:aux:conver3}.
Proceeding
similarly, but using the last bound in \eqref{eq:04:lemma:aux:conver3} and
\eqref{eq:06:lemma:aux:conver3} instead, we prove
\eqref{eq:02:lemma:aux:conver3}.
\end{proof}

\begin{lemma}\label{lemma:aux:conver} There exists $C>0$ such that for any 
$\alpha\in[-1,1]$ and for all
$g\in H^1(-1,1)$ with $g(\alpha)=0$ 
\begin{eqnarray}
 \label{eq:01:lemma:aux:conver}
 |g(x)|&\le& C |x-\alpha|^{1/4}\|g'\|_{1,w},\qquad x\in[-1,1],\\
 \label{eq:02:lemma:aux:conver}
 \int_{-1}^1 \bigg|\frac{g(x)}{x-\alpha}\bigg|\,{\rm d }x&{\le}&
C\|g'\|_{1,w}.
\end{eqnarray}
\end{lemma}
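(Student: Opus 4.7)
The plan is to prove the pointwise estimate \eqref{eq:01:lemma:aux:conver} first and then deduce \eqref{eq:02:lemma:aux:conver} as a direct corollary by integrating against $|x-\alpha|^{-1}$.

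For \eqref{eq:01:lemma:aux:conver}, since $g\in H^1(-1,1)\hookrightarrow C([-1,1])$ and $g(\alpha)=0$, I would write $g(x)=\int_\alpha^x g'(t)\,{\rm d}t$ and factor the integrand as $|g'(t)|=\big(|g'(t)|(1-t^2)^{1/4}\big)\cdot(1-t^2)^{-1/4}$. The Cauchy--Schwarz inequality then yields
\[
|g(x)|\le \|g'\|_{1,w}\,\left|\int_\alpha^x (1-t^2)^{-1/2}\,{\rm d}t\right|^{1/2}=\|g'\|_{1,w}\,|\arcsin x-\arcsin\alpha|^{1/2}.
\]
This reduces the claim to the H\"older bound $|\arcsin x-\arcsin\alpha|\le C|x-\alpha|^{1/2}$ valid uniformly for $x,\alpha\in[-1,1]$.

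This H\"older bound is the only non-routine step, since $(1-t^2)^{-1/2}$ is singular at $t=\pm1$ and hence $\arcsin$ fails to be Lipschitz on the closed interval. I would handle it by splitting into two regimes: on any interior set $[-1+\delta,1-\delta]$, $\arcsin$ is smooth so the bound is trivial; near $\pm 1$ I would use the expansion $\arcsin(\pm(1-\varepsilon))=\pm\pi/2\mp\sqrt{2\varepsilon}+O(\varepsilon^{3/2})$ together with the algebraic identity $\sqrt{a}-\sqrt{b}=(a-b)/(\sqrt{a}+\sqrt{b})$ to extract the factor $|x-\alpha|^{1/2}$ explicitly. Combining this with the Cauchy--Schwarz estimate above gives \eqref{eq:01:lemma:aux:conver}.

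The integral estimate \eqref{eq:02:lemma:aux:conver} is then immediate:
\[
\int_{-1}^1\left|\frac{g(x)}{x-\alpha}\right|{\rm d}x\le C\|g'\|_{1,w}\int_{-1}^1|x-\alpha|^{-3/4}\,{\rm d}x,
\]
and the last integral is bounded by $\int_{-2}^{2}|s|^{-3/4}\,{\rm d}s<\infty$ independently of $\alpha\in[-1,1]$. The main obstacle is thus the H\"older-$1/2$ continuity of $\arcsin$ at the endpoints of $[-1,1]$; once that is in hand, the rest of the argument is just Cauchy--Schwarz and the integration of a locally integrable weight.
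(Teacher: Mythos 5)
Your proposal is correct and follows essentially the same route as the paper: write $g(x)=\int_\alpha^x g'$, apply Cauchy--Schwarz with the weight split $(1-t^2)^{\pm 1/4}$ to get $|g(x)|\le|\arcsin x-\arcsin\alpha|^{1/2}\|g'\|_{1,w}$, invoke the uniform H\"older-$1/2$ bound for $\arcsin$ on $[-1,1]$, and then obtain \eqref{eq:02:lemma:aux:conver} by integrating $|x-\alpha|^{-3/4}$. The only difference is that you supply the details (the endpoint analysis of $\arcsin$ and the integrability of $|x-\alpha|^{-3/4}$) that the paper states without proof.
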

\begin{proof}
Clearly, \eqref{eq:02:lemma:aux:conver} follows from
\eqref{eq:01:lemma:aux:conver}. 

Note first    
\[
 C:=\max_{\alpha\in[-1,1]} \Big\|
\frac{\arcsin()-\arcsin \alpha }{|\cdot-\alpha|^{1/2}}\Big\|_{L_\infty(-1,1)}
<\infty.
\]
Since $g(\alpha)=0$, it follows 
\begin{eqnarray*}
|g(x)|&=&\bigg|\int_\alpha^{x}  g'(s)\,{\rm d}s\bigg|\le 
\bigg[\int_{\alpha}^x \frac{{\rm d}
s}{\sqrt{1-s^2}}\bigg]^{1/2} \bigg[\int_{\alpha}^x |g'(s)|^2(1-s^2)^{1/2}\,{\rm
d}s\bigg]^{1/2}\nonumber\\
&\le& |\arcsin x -\arcsin\alpha|^{1/2} \bigg[\int_{-1}^1
|g'(s)|^2 w(s)\,{\rm
d}s\bigg]^{1/2}\le C|x-\alpha|^{1/4}\|g'\|_{1,w}.
\end{eqnarray*} 
 The result is then proven.
\end{proof}

We are ready to give the main result of this section which summarises the
convergence property of ${\cal I}_{k,N}(f)$ in terms of $N$, $k$ and the
regularity of $f$.

\begin{theorem} \label{theo:Conv} For all $\alpha\in[-1,1]$ there exists
$C_\alpha>0$ so that for $\delta\in\{0,1\}$
\begin{equation}\label{eq:Theo:conv:01}
|{\cal I}^\alpha_k(f)-{\cal I}^\alpha_{k,N}(f)|\le C_\alpha (1+k)^{-\delta}
N^{\delta-r}\|f_c\|_{H^r_\#}
\end{equation}
for all $r\ge 1$ and $k\ge 0$.  

Furthermore,  for all
$\varepsilon>0$ there exits $C_\varepsilon>0$ such that if $\alpha=\pm 1$ or 
$\alpha=0$ and $N$ is even it holds 
\begin{eqnarray}\label{eq:Theo:conv:03}
 |{\cal I}^{\alpha}_k(f)-{\cal I}^{\alpha}_{k,N}(f)|&\le& C_\varepsilon
(1+k)^{-2}(1+\alpha^2\log k) N^{7/2+\varepsilon-r}\|f_c\|_{ H^{r}_\#},
\end{eqnarray}
for all $f\in H_\#^r$ with $r>7/2+\varepsilon$
\end{theorem}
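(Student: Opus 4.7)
The starting point is the identity
$$\mathcal{I}^\alpha_k(f)-\mathcal{I}^\alpha_{k,N}(f)=\int_{-1}^1 E_N(x)\log((x-\alpha)^2)\exp(ikx)\,dx,$$
and each power of $(1+k)^{-1}$ is to be extracted by an integration by parts, paired with a polynomial-in-$N$ bound drawn from \eqref{eq:interp:estimates} or Lemma \ref{lemma:aux:conver3}. For $\delta=0$ in \eqref{eq:Theo:conv:01}, a direct application of the weighted Cauchy-Schwarz inequality \eqref{eq:CSch} yields
$$|\mathcal{I}^\alpha_k(f)-\mathcal{I}^\alpha_{k,N}(f)|\le \|E_N\|_{-1,w}\,\|\log((\cdot-\alpha)^2)\|_{1,w},$$
where the second factor is finite uniformly in $\alpha\in[-1,1]$ (the singularity of the logarithm is integrable against the weight $w^{1/2}$) and the first factor is controlled by $CN^{-r}\|f_c\|_{H^r_\#}$ through \eqref{eq:interp:estimates}.

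For $\delta=1$, I would introduce the antiderivative
$$H(x):=\int_{-1}^x\log((t-\alpha)^2)\exp(ikt)\,dt$$
and integrate by parts once. Because $E_N(\pm 1)=0$ (the endpoints are Chebyshev-Lobatto nodes), the boundary terms vanish, leaving
$$\mathcal{I}^\alpha_k(f)-\mathcal{I}^\alpha_{k,N}(f)=-\int_{-1}^1 E_N'(x)H(x)\,dx,$$
so that a second use of \eqref{eq:CSch} gives $|\text{error}|\le \|E_N'\|_{1,w}\,\|H\|_{-1,w}$. The key technical step is the bound $\|H\|_{-1,w}\le C_\alpha/(1+k)$, which I would obtain by one further integration by parts inside the definition of $H$ (the same manipulation underlying Lemma \ref{lemma:aux:conver2}) and by noting that the weight $w^{-1/2}$ in $\|\cdot\|_{-1,w}$ absorbs the mild logarithmic singularities of $H$ at $x=\alpha$ and at $x=\pm 1$. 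Combining with $\|E_N'\|_{1,w}\le C N^{1-r}\|f_c\|_{H^r_\#}$ from \eqref{eq:interp:estimates} yields \eqref{eq:Theo:conv:01} for $\delta=1$.

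The sharpened bound \eqref{eq:Theo:conv:03} exploits the fact that in each distinguished case the point $\alpha$ is itself a Chebyshev-Lobatto node, so $E_N(\alpha)=0$. Writing $E_N(x)=(x-\alpha)e_N^\alpha(x)$ reduces the error to
$$\int_{-1}^1(x-\alpha)\log((x-\alpha)^2)\,e_N^\alpha(x)\,\exp(ikx)\,dx,$$
whose integrand is now continuous on $[-1,1]$. I would integrate by parts twice. All boundary contributions after the first step vanish: for $\alpha=\pm 1$ the term at $x=\alpha$ is killed by the factor $(x-\alpha)\log((x-\alpha)^2)\to 0$, while the term at $x=-\alpha$ is killed by $e_N^\alpha(-\alpha)=(E_N(-\alpha)-E_N(\alpha))/(-2\alpha)=0$ (both $\pm 1$ being Chebyshev nodes); for $\alpha=0$ with $N$ even, $\log(1)=0$ at both endpoints. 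Applying the product rule to $[(x-\alpha)\log((x-\alpha)^2)e_N^\alpha]'$ produces three pieces, each of which is integrated by parts a second time; the resulting integrals and boundary terms are controlled by Lemma \ref{lemma:aux:conver3} with $s_1>7/2$ (producing the $N^{7/2+\varepsilon-r}$ loss) and by Lemma \ref{lemma:aux:conver2} applied to a log-weighted antiderivative $\int\log((x-\alpha)^2)\phi(x)\exp(ikx)\,dx$ for smooth $\phi$. The factor $(1+\alpha^2\log k)$ emerges precisely at this last estimate, since Lemma \ref{lemma:aux:conver2} delivers decay $k^{-1}\log k$ when $\alpha=\pm 1$ but only $k^{-1}$ when $\alpha=0$. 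The main obstacle is the careful bookkeeping in the second integration by parts, in particular verifying that the antiderivatives of the log-singular factors obey Lemma \ref{lemma:aux:conver2}-type bounds \emph{uniformly} in the upper limit of integration, not only at the single endpoint $x=1$.
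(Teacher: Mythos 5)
Your treatment of \eqref{eq:Theo:conv:01} is sound. The $\delta=0$ case coincides with the paper's argument. For $\delta=1$ you take a genuinely different route: instead of subtracting $E_N(\alpha)$ and integrating by parts against $\exp(ikx)$ (the paper's choice, which produces the four terms $R_N^{(1)},\dots,R_N^{(4)}$ and invokes Lemmas \ref{lemma:aux:conver2} and \ref{lemma:aux:conver}), you transfer the derivative onto $E_N$ through the antiderivative $H$. This is cleaner -- the boundary terms vanish for free and Lemma \ref{lemma:aux:conver} is not needed -- but it shifts the burden onto the new estimate $\|H\|_{-1,w}\le C_\alpha(1+k)^{-1}$, which is strictly stronger than Lemma \ref{lemma:aux:conver2} (a version of it uniform in the upper integration limit). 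The estimate is true: for $x$ on the same side of $\alpha$ as $-1$ one integrates by parts without crossing the singularity and gets $|H(x)|\le C_\alpha k^{-1}(1+|\log|x-\alpha||)$; for $x$ on the other side one must write $H(x)=\xi_0^\alpha(k)-\int_x^1$ and argue from the right endpoint, since the naive integration by parts inside $H$ produces the divergent $\int e^{ikt}(t-\alpha)^{-1}\,{\rm d}t$ across $t=\alpha$; finally $\log^2|x-\alpha|$ is integrable against $w^{-1}$. You flag this yourself, but it is a lemma you must actually supply, as the paper proves nothing of this kind.

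The proof of \eqref{eq:Theo:conv:03} has a genuine gap. Expanding $\big[(x-\alpha)\log((x-\alpha)^2)e_N^\alpha\big]'$ by the product rule produces, among the three pieces, the term $\int_{-1}^1(x-\alpha)\log((x-\alpha)^2)(e_N^\alpha)'(x)e^{ikx}\,{\rm d}x$, and a second factor of $k^{-1}$ cannot be extracted from it with the quantities Lemma \ref{lemma:aux:conver3} controls. Any further integration by parts on this piece requires either $(e_N^\alpha)''$ or the unweighted endpoint values $(e_N^\alpha)'(\pm 1)$; the lemma only bounds $\|w(e_N^\alpha)'\|_{L_\infty(-1,1)}$, the weight vanishes exactly at $\pm 1$, and unweighted bounds on $(e_N^\alpha)'$ or bounds on $(e_N^\alpha)''$ cost an extra power of $N$ (they scale like $n^4$ per Chebyshev mode rather than the $n^3$ of \eqref{eq:06:lemma:aux:conver3}), destroying the exponent $7/2+\varepsilon$. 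A direct Cauchy--Schwarz bound \eqref{eq:CSch} on this piece yields $O(N^{7/2+\varepsilon-r})$ with no $k^{-1}$ at all, i.e.\ only the $\delta=1$ rate. The repair is to undo the splitting: since $(x-\alpha)(e_N^\alpha)'=E_N'-e_N^\alpha$, this piece plus your first piece equals $\int_{-1}^1 E_N'(x)\log((x-\alpha)^2)e^{ikx}\,{\rm d}x$, which can be closed either as the paper does (subtract $E_N'(\alpha)$, integrate by parts against $e^{ikx}$, and use \eqref{eq:02:lemma:aux:conver3} together with Lemma \ref{lemma:aux:conver2} -- this is exactly where $E_N'(\alpha)\xi_0^\alpha(k)$, and hence the $\log k$ for $\alpha=\pm 1$, arises) or with your own $H$ device, pairing $E_N''$ with $H$ via \eqref{eq:CSch} and picking up $E_N'(1)\xi_0^\alpha(k)$ as the boundary term. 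With that regrouping, the rest of your outline -- the vanishing of the first boundary terms in the three distinguished cases, the role of Lemma \ref{lemma:aux:conver3}, and the origin of the factor $(1+\alpha^2\log k)$ -- is correct and parallels the paper.
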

\begin{proof}Note first
\begin{eqnarray}
 |{\cal I}^\alpha_k(f)-{\cal I}^\alpha_{k,N}(f)|&=&\bigg|\int_{-1}^1
 E_N(x) \log \big( (x-\alpha)^2\big)\,\exp(ikx)\,{\rm
d}x\bigg|\le \|{\log\big(\cdot\,-\,\alpha\big)^2}\|_{1,w} 
\|E_N\|_{-1,w}\nonumber\\
&\le& C N^{-r}\|f_c\|_{H^r_\#}.
\label{eq:Theo:conv:03.5}  
\end{eqnarray}
where we have used   \eqref{eq:interp:estimates} (see also \eqref{eq:def:E}).
This  proves \eqref{eq:Theo:conv:01} for
$\delta=0$. Observe that from now on we can assume, 
without loss of generality, that $k\ge 1$. 

To obtain \eqref{eq:Theo:conv:01} for
$\delta=1$ we write
\begin{eqnarray}
|{\cal I}^\alpha_k(f)-{\cal I}^\alpha_{k,N}(f)|&=& \bigg|\int_{-1}^1
\big(E_N(x)-E_N(\alpha)\big)\log \big( (x-\alpha)^2\big)\,\exp(ikx)\,{\rm
d}x+ E_N(\alpha) \xi_0^\alpha(k)\bigg| \nonumber\\
&\le&\frac{1}{k}
\bigg[\bigg|(E_N(x)-E_N(\alpha))\log\big((x-\alpha)^2\big)\exp(ikx)\Big|_ { x=-1
}^{x=1}\bigg| \nonumber \\
&&+ \bigg|\int_{-1}^1
 E_N'(x)\log\big( (x-\alpha)^2\big) \exp(ikx)\,{\rm d}x\bigg| \nonumber \\
&&+ 2\bigg|\int_{-1}^1
\frac{E_N(x)-E_N(\alpha)}{x-\alpha} \exp(ikx)\,{\rm
d}x\bigg|+  {|E_N(\alpha)|}|k\xi_0^\alpha(k)|\bigg] \nonumber\\
&=:&\frac{1}{k}\big[R_N^{(1)}(k)+R_N^{(2)}(k)+R_N^{(3)}(k)+R_N^{(4)}(k)\big].
\label{eq:Theo:conv:04}
\end{eqnarray}
For bounding the first term, we make use of \eqref{eq:01:lemma:aux:conver} in 
Lemma \ref{lemma:aux:conver}  with
$g=E_N-E_N(\alpha)$:
\begin{equation}\label{eq:R1}
   R^{(1)}_{N}(k) \le
C\|E_N'\|_{1,w}|\big(|h_\alpha(-1)|+|h_\alpha(1)|\big),\quad
h_\alpha(x):=|x-\alpha|^{1/4}\log\big((x-\alpha)^2\big).
\end{equation}
where we have used that $h_\alpha\in{\cal
C}^0[-1,1]$
for any $\alpha\in[-1,1]$.

For the second term, notice that   \eqref{eq:CSch} and  
\eqref{eq:interp:estimates} imply 
\begin{equation}\label{eq:R2}
 R^{(2)}_{N}(k) \le  \|\log\big(\cdot\,-\,\alpha^2\big)\|_{-1,w} 
 \|E_N'\|_{1,w}.
\end{equation}

On the other hand,  \eqref{eq:02:lemma:aux:conver} of Lemma
\ref{lemma:aux:conver} yields
\begin{equation}\label{eq:R3}
 R^{(3)}_{N}(k) \le    C \|E_N'\|_{1,w}.
\end{equation}

Finally,   $E_N(\pm 1)=0$ and therefore $R^{(4)}_{N}(k)$ vanishes for
$\alpha=\pm 1$. Otherwise, 
Lemma \ref{lemma:aux:conver2} implies 
\begin{equation}\label{eq:R4}
  R^{(4)}_{N}(\alpha) \le  C'_\alpha |E_N(\alpha)|\le 
C'_\alpha\|E_N\|_{L_\infty(-1,1)}. 
\end{equation}
Bounding 
\eqref{eq:R1}--\eqref{eq:R4} with \eqref{eq:interp:estimates}, we derive
\eqref{eq:Theo:conv:01} for
$\delta=1$.

To prove
\eqref{eq:Theo:conv:03} we have to perform  another step of integration by
parts. First, we note, that, by hypothesis $E_N(\alpha)=0$ which implies
\begin{equation}\label{eq:R1R4}
 R_N^{(1)}(k)=
 R_N^{(4)}(k)=0. 
\end{equation}
 Thus, we just have to estimate $R_N^{(2)}(k)$ and
$R_N^{(3)}(k)$. For the first term, and proceeding as in
\eqref{eq:Theo:conv:04}, we obtain
\begin{eqnarray}
R_N^{(2)}(k)\!\!
&\le&\!\!
 \frac{1}{k}\bigg[\bigg|
(E'_N(x)-E'_N(\alpha))\log((x-\alpha)^2)\exp(ikx)\big|_{x=-1}^{x=1}
\bigg|\nonumber\\
&&+
\Big|\int_{-1}^1
E_N''(x)\log\big((x-\alpha)^2\big) \exp(ikx)\,{\rm d}x\Big|\nonumber\\
&&+2
\Big|\int_{-1}^1\frac{
E_N'(x)-E_N'(\alpha)}{ x-\alpha } \exp(ikx)\,{\rm d}x\Big|  +
\frac1k\big(|E_N'(\alpha)|\:|k\xi_0^\alpha(k)|\big) \nonumber\\
&=:&\frac{1}{k}\Big[S_N^{(1)}(k)+S_N^{(2)}(k)+S_N^{(3)}(k)+S_N^{(4)}(k)\Big].
\end{eqnarray}
Proceeding as in \eqref{eq:R2}-\eqref{eq:R3}, we  obtain
\[
  S_N^{(2)}(k) +
  S_N^{(3)}(k) \le C\|E_N''\|_{1,w}\le {C{\sqrt{\pi}}}\|w
E_N''\|_{L_\infty(-1,1)}\le 
{C_{\varepsilon}} 
N^{7/2+\varepsilon-r}\|f_c\|_{H_\#^r},
\]
where in the last inequality we have used 
\eqref{eq:02:lemma:aux:conver3} of Lemma
\ref{lemma:aux:conver3}.

On the other hand, 
$S_N^{(4)}(k)$ is bounded by applying again Lemma \ref{lemma:aux:conver2} (it is
just here where the $\log k$ term comes  up for $\alpha=\pm 1$). 

It only remains to study $S_N^{(1)}(k)$. Clearly,  for $\alpha=0$ it vanishes. 
For $\alpha= 1$, we have instead
\begin{eqnarray*}
 S_N^{(1)}(k)&\le& 4\log 2\|E'_N\|_{L_\infty(-1,1)}+\lim_{x\to 1}
|(E'_N(x)-E'_N(1))\log(x-1)^2|. 
\end{eqnarray*}
Since
\eqref{eq:01:lemma:aux:conver}
\[
|(E'_N(x)-E'_N(1))\log(x-1)^2|
\le   \|E_N''w\|_{L_\infty(-1,1)} \
\big|(1-x)^{1/4}\log\big((x-1)^2\big)\big|   \to
0,\quad \text{as }x\to 1,
\]
  \eqref{eq:01:lemma:aux:conver3} implies
\[
 S_N^{(1)}(k) \le 2\log 2\|E'_N\|_{L_\infty(-1,1)}\le C
N^{7/2-r}\|f_c\|_{H_{\#}^r}.
\]
The case $\alpha=-1$ is dealt with similarly.

In brief, we have proved that for $\alpha=\pm 1$, or for
$\alpha=0$ and $N$ is even, it holds
\begin{equation}
R_N^{(2)}(k)\le
{C_{\varepsilon}}{k}^{-1}
N^{7/2+\varepsilon-r}(1+\alpha^2\log
k)\|f_c\|_{H_\#^r}\label{eq:Theo:conv:05} 
\end{equation}
for  $\varepsilon>0$, $r\ge 7/2+\varepsilon$ with $C_\varepsilon$ independent of
$N$, $k$ and
$f$. 

Similarly, one can prove easily 
\begin{equation}
R_N^{(3)}(k) \le \frac{1}{k}\Big[ |e^\alpha_N(-1)|+|e^\alpha_N(1)| +
C \| (e_N^\alpha)'w\|_{L_\infty(-1,1)}\Big]\le
{C_{\varepsilon}}{k}^{-1}
N^{7/2+\varepsilon-r}\|f_c\|_{H_\#^r}\label{eq:Theo:conv:06}
\end{equation}
for all $r\ge7/2+\varepsilon$, and  $C_\varepsilon$ depending only on
$\varepsilon>0$. 

Plugging \eqref{eq:Theo:conv:05} and \eqref{eq:Theo:conv:06} into
\eqref{eq:Theo:conv:04} and recalling \eqref{eq:R1R4}, we have completed the
proof of \eqref{eq:Theo:conv:03}.
\end{proof}

 \begin{remark} 
We will show in the last section (see Experiment 5)  that the restriction of
$N$ to be even if $\alpha=0$ for achieving the $k^{-2}$-decay of the error  is
really needed. In the same experiment, we can check that the
error, specially for high values of $k$, is smaller for
$\alpha=0$ than for that obtained if $\alpha=1$. This supports empirically 
the fact that in the second case the $\log k$ term is certainly part of the
error term and therefore it affects, although very slightly, the convergence of 
the
rule. 
  \end{remark}

\section{Stable computation of the weights}

When the practical  implementation of the
quadrature rule is considered, we face that it essentially reduces to find a
way of evaluating 
$\xi^\alpha_n(k)$ cf. \eqref{eq:omegank}-\eqref{eq:omegank:k0} fast and
accurately.  In this section  we
present  the algorithms to carry out this evaluation and we
leave for the next one the proofs of the results concerning the stability of
such computations. 

For both, the oscillatory and non-oscillatory case,  what we will actually
compute is
\begin{equation}\label{eq:defetan}
 \eta_n^\alpha(k):=\int_{-1}^1 \log((x-\alpha)^2)U_n(x)\,\exp(ikx){\rm d}x
\end{equation}
where 
\begin{equation}
\label{eq:defUn}
U_n:=\frac{1}{n+1}T'_{n+1} 
\end{equation}
 is the Chebyshev polynomial of the second kind and degree $n$.  Notice that,
from this definition,  we have $U_{-1}=0$ and, according to that, we can
set
\[
  \eta_{-1}^\alpha(k):=0
\]
which simplifies some forthcoming expressions. From
\eqref{eq:UnTn:02} we have
\begin{equation}\label{eq:xiInTermsOfEta}
\xi_0^\alpha(k)= \eta_0^\alpha(k),\quad 
\xi_n^\alpha(k)=
\frac{1}{2}\big(\eta_n^\alpha(k)-\eta_{n-2}^\alpha(k)\big),\qquad n=1,2\ldots 
\end{equation} 
Observe that by \eqref{eq:Tnbounded}--\eqref{eq:UnIntegral}, there exists
$C>0$ such that for any $\alpha\in[-1,1]$ and $n$ 
 \begin{eqnarray}
 \label{eq:xi_is_bounded}
 |\xi^\alpha_n(k)|&\le& \int_{-1}^1
 \big|\log((x-\alpha)^2)\big|\,{\rm d}x
  \le C,\\
 |\eta^\alpha_n(k)|&\le& \bigg[\int_{-1}^1
 |U_n(x)|^2 {\sqrt{1-x^2}}\,   {\rm d}x
\,\bigg]^{1/2}  \bigg[\int_{-1}^1
 \big(\log((x-\alpha)^2)\big)^{2}\frac{{\rm d}x}
 {\sqrt{1-x^2}}\,\bigg]^{1/2}\le C.
 \label{eq:eta_is_bounded}
 \end{eqnarray}
That is,  these coefficients are bounded independent of $n$,
$\alpha$ and $k$.

\subsection{The non-oscillatory case}

Recall that for $k=0$, we have denoted $\xi_n^\alpha$ and
$\eta^\alpha_n $ instead of
$\xi^\alpha_n(0)$  and $\eta^\alpha_n(0)$ to lighten the
notation.

Assume that $\alpha\ne \pm 1$. Using the recurrence relation
for Chebyshev polynomials cf. \eqref{eq:TnRecc} and \eqref{eq:defUn}, we
deduce for $n\ge 1$  
\begin{eqnarray}
\eta_{n}^\alpha&=&\int_{-1}^1 U_n(x)\log\big((x-\alpha)^2\big)\,{\rm
d}x\nonumber\\
&=&
\int_{-1}^1 2x U_{n-1}(x)\log((x-\alpha)^2)\, {\rm
d}x-\int_{-1}^1
U_{n-2}(x)\log((x-\alpha)^2)\,{\rm d}x \nonumber\\
&=&\frac{2}{  n}\int_{-1}^1  (x-\alpha) T'_{ n}(x)\log((x-\alpha)^2)\, {\rm
d}x+2\alpha \eta_{n-1}^\alpha -\eta_{n-2}^\alpha\label{eq:01:etaalpha}. 
\end{eqnarray}
Integrating by parts in the first integral, we easily see that
\begin{eqnarray}
\int_{-1}^1  (x-\alpha) T'_{ n}(x)\log((x-\alpha)^2) {\rm
d}x
&=&\nonumber \\
&&\hspace{-5cm}=(x-\alpha)T_n(x)\log((x-\alpha)^2)\Big|_{x=-1}^{x=1}
-\int_{-1}^1 T_{n}(x)  \log((x-\alpha)^2) \,{\rm
d}x-2\int_{-1}^1 T_n(x)\,{\rm d}x\nonumber\\
&&\hspace{-5cm}= (1-\alpha)\log((1-\alpha)^2)+(-1)^{n }
(1+\alpha)\log((1+\alpha)^2)\nonumber\\
&&\hspace{-4.5cm}  -\frac12\big[
\eta_n^\alpha-\eta_{n-2}^\alpha\big]+
\left\{\begin{array}{ll}\frac{4}{n^2-1},\quad& \text{if $n$ is even},\\[1.2ex]
0,& \text{otherwise},
 \end{array}\right. \label{eq:01b:etaalpha}
\end{eqnarray}
where we have used that   $T_n(1)=1=(-1)^nT_{n}(-1)$, 
\eqref{eq:xiInTermsOfEta} (see also \eqref{eq:UnTn:02}) and  \eqref{eq:intTn}.
Hence, inserting \eqref{eq:01b:etaalpha}   in \eqref{eq:01:etaalpha} and 
resorting
appropriately the elements above, we arrive to the following {three}-terms
linear recurrence 
\begin{equation}
\label{eq:recPureLogcase}
 \eta_{n}^\alpha=\frac{2\alpha
n}{n+1}\eta_{n-1}^\alpha-\frac{n-1}{n+1}\eta_{n-2}^\alpha+\gamma_n^\alpha,
\qquad n=1,2,\ldots
\end{equation}
with
\begin{equation}\label{eq:gamma_n_alpha}
\gamma_n^\alpha:=\frac{4}{n+1}\left\{
\begin{array}{lcl}\displaystyle
(1-\alpha)\log(1-\alpha)+(1+\alpha)\log(1+\alpha)
+\frac{2}{n^2-1},\quad&\mbox{for
even
$n$},\\[1.25ex]
\displaystyle
(1-\alpha)\log(1-\alpha)-(1+\alpha)\log(1+\alpha),\quad &\mbox{for
odd $n$}.\\
\end{array}
\right.
\end{equation}
For $\alpha=\pm 1$, \eqref{eq:recPureLogcase} remains valid  with
 \begin{equation}
  \label{eq:gamma_n_1}
\gamma_n^{\pm 1}:=\frac{8}{n+1}\left\{
\begin{array}{lcl}\displaystyle
\log
2+\frac{1}{n^2-1},&\quad\mbox{for
even
$n$},\\[1.25ex]
\displaystyle
\mp \log 2, &\quad\mbox{for
odd $n$},
\end{array}
\right.
\end{equation}
which corresponds to take the limit as $\alpha\to \pm 1$ in 
\eqref{eq:gamma_n_alpha}.

Straightforward calculations show, in addition, that
\begin{equation}
\label{eq:etaalpha0} \eta_0^{\alpha}:=\left\{\begin{array}{ll}                 
       
-(\alpha -1) \log \left((\alpha
-1)^2\right)+(\alpha +1) \log \left((\alpha +1)^2\right)-4,
                         &\text{if }\alpha\ne \pm 1,\\
4\log 2-4,&\text{if }\alpha= \pm 1. 
\end{array}
\right.
\end{equation}

Recalling that $\eta_{-1}^\alpha=0$,  we are  ready to write down
the first algorithm.

\paragraph{Algorithm I: compute $\xi^\alpha_{n}$ for $n=0,1,\ldots,N $} 
\begin{enumerate}
\item Set $\eta_{-1}^\alpha=0$ and compute $\eta_0^\alpha$ according to
\eqref{eq:etaalpha0}.

\item For $n=1,\ldots,N$ 
\[
 \eta_{n}^\alpha=\frac{2\alpha
n}{n+1}\eta_{n-1}^\alpha-\frac{n-1}{n+1}\eta_{n-2}^\alpha+\gamma_n^\alpha,
\]
with $\gamma_n^\alpha$ defined in \eqref{eq:gamma_n_alpha}-\eqref{eq:gamma_n_1}.

\item Set
\[
 \xi_{0}^\alpha  =\eta_0^\alpha ,\qquad  
\xi_{n}^\alpha=
{\textstyle\frac{1}{2}}\big[\eta_{n}^\alpha-\eta_{n-2}^\alpha\big],\qquad
n=1,2,\ldots,N.
\]
\end{enumerate}

\begin{remark} For $\alpha=0$ the algorithm is even simpler since by parity
$\eta^0_{2n+1}=\xi^0_{2n+1}=0$  and  step 2 of the algorithm 
becomes 
\[
  \eta_{2n}^0= -\frac{2n-1}{2n+1}\eta_{2n-2}^0+\frac{8}{(2n+1)(4n^2-1)}.
\]

\end{remark}

\subsection{The oscillatory case}

Because of \eqref{eq:xiInTermsOfEta},    
\begin{eqnarray}
\frac{1}2\big(\eta_n^\alpha(k)- \eta_{n-2}^\alpha(k)\big)&=&
\xi^\alpha_n(k)=\int_{-1}^1
(T_n(x)-T_n(\alpha))\log((x-\alpha)^2)\exp(ikx)\,{\rm d}x
\nonumber \\ 
&&+T_n(\alpha)\int_{-1}^1 \log((x-\alpha)^2)\exp(ikx)\,{\rm
d}x.\label{eq:OscillatingCase01} 
\end{eqnarray}
Assume now that $\alpha\ne \pm 1$. Integrating by parts we
derive
\begin{eqnarray}
\int_{-1}^1
(T_n(x)-T_n(\alpha))\log((x-\alpha)^2)\exp(ikx)\,{\rm d}x
\nonumber\\
&&\hspace{-7cm}=\frac{1}{ik}\bigg[
(T_n(x)-T_n(\alpha))\log((x-\alpha)^2)\exp(ikx)\Big|_{x=-1}^{x=1}\nonumber\\
&&\hspace{-6.5cm} - \int_{-1}^1T_n'(x)
\log((x-\alpha)^2)\exp(ikx)\,{\rm d}x-
2\int_{-1}^1 \frac{T_n(x)-T_n(\alpha)}{x-\alpha}\exp(ikx)\,{\rm
d}x\bigg]\label{eq:OscillatingCase02-0} 
\\
&&\hspace{-7cm}=\frac{1}{ik}\bigg[
(1-T_n(\alpha))\log((1-\alpha)^2)\exp(ik
)+((-1)^{n+1}+T_n(\alpha))\log((1+\alpha)^2)\exp(-ik 
)\nonumber 
\\
&&\hspace{-6.5cm} -n \int_{-1}^1 U_{n-1}(x) \log((x-\alpha)^2)\exp(ikx)\,{\rm
d}x\nonumber\\
&&\hspace{-6.5cm} -
 2\int_{-1}^1 U_{n-1}(x)\exp(ikx)\,{\rm d}x-4\sum_{j=0}^{n-2}
T_{n-1-j}(\alpha)\int_{-1}^1
U_{j}(x)\exp(ikx)\,{\rm d}x\bigg].
\label{eq:OscillatingCase02} 
\end{eqnarray}
(We have applied \eqref{eq:Tn-Tn} to write the last integral 
in \eqref{eq:OscillatingCase02-0} as the sum in the right-hand-side of 
\eqref{eq:OscillatingCase02}).

Inserting \eqref{eq:OscillatingCase02} in \eqref{eq:OscillatingCase01} and using
\eqref{eq:defUn} we derive the following recurrence equation
\begin{equation}
 \label{eq:recurrence}
\eta_{n}^\alpha(k)-\frac{2n}{ik}\eta_{n-1}^\alpha(k)+\eta^\alpha_{n-2}(k)=
\gamma_{n}^\alpha(k)
\end{equation}
where  
\begin{eqnarray}
\gamma_n^\alpha(k)&:=&\frac{2}{ik}
\big[(1-T_n(\alpha))\log((1-\alpha)^2)\exp(ik)+((-1)^{n+1}+T_n(\alpha))
\log((1+\alpha)^2
)\exp(-ik)\big]\nonumber\\
&&-\frac{4}{ik}\bigg[2\sum_{j=0}^{n-2}T_{n-1-j}
(\alpha)\rho_j(k)+{\rho_{n-1}(k)}\bigg]+{2T_n(\alpha)\eta_{0}^\alpha(k)}, 
\label{eq:gammanalphak}
\end{eqnarray}
with
\[
 \rho_j(k):=\int_{-1}^1 U_j(x)\,\exp(ikx)\,{\rm d}x, \quad j=0,\ldots, n-1.
\]
Let us point out that $(\rho_j(k))_{j=0}^N$ can be computed in ${\cal O}(N)$
operations (see
\cite{DoGrSm:2010}). 

 For $\alpha=\pm
1$ we obtain the same recurrence \eqref{eq:recurrence} with
\begin{eqnarray}
 \gamma_n^{\pm 1}(k)\!\!&=&\!\!\frac{4}{ik}\Big[
\left\{\begin{array}{ll}
       \log(4)\exp(\mp ik),\quad& \text{if $n$ is odd}\nonumber\\
0,\quad&\text{otherwise}
       \end{array}
\right. \\
&&-2\sum_{j=0}^{n-2}(\pm 1)^{n-j+1}
\int_{-1}^1 U_j(x)\exp(ikx) {\rm d}x-\int_{-1}^1 U_{n-1}(x)\exp(ikx) {\rm d
}x\Big].\label{eq:gammanalphak01}
\end{eqnarray}

It just remains to compute
$\eta_{0}^\alpha(k)$ for setting up the algorithm. For
this purpose  we introduce the sine and cosine integral functions
\[
 \Si(t):=\int_0^t\frac{\sin x}{x}\,{\rm d}x,\qquad
 \Ci(t):=\gamma+ \log(t)+ \int_0^t\frac{\cos x-1}{x}\,{\rm d}x, 
\]
with $\gamma\approx 0.57721$ the Euler-Mascheroni constant. Straightforward
calculations  show that
\begin{eqnarray}
\eta_0^\alpha(k)=\xi_0^\alpha(k)&=&\frac{2}{k}\Big[\log(1-\alpha^2)\sin k
+\sin(\alpha k)\big(\Ci((\alpha+1)k))-\Ci((1-\alpha)k)\big)\nonumber\\
&&
-\cos(\alpha k)\big(\Si((\alpha+1)k)+\Si((1-\alpha)k)\big)
\Big]\nonumber\\
&&+\frac{2i}{k}\Big[ \log\big(\frac{1+\alpha}{1-\alpha}\big)\cos k 
+\cos(\alpha k) \big(\Ci((1-\alpha)k)-\Ci((1+\alpha)k)\big)\nonumber\\
&&
-\sin(\alpha
k)\big(\Si((1-\alpha)k)+\Si((1+\alpha)k)\Big],\label{eq:etaalpha_01}
\end{eqnarray}
for $\alpha\ne \pm 1$, and
\begin{eqnarray}
\eta_{0}^{\pm 1}(k)=\xi_0^{\pm 1}(k)&=&\frac{2}k \Big[-(\gamma-\Ci(2k)
+\log(k/2))\sin k-\Si(2k)\cos k\Big]\nonumber\\
&&\pm \frac{2i}k\Big[(\gamma-\Ci(2k) +\log(2k))\cos k-
                \Si(2k)\sin k\Big].\label{eq:eta_01}
\end{eqnarray}
 
From now on, we will denote by $\lfloor x \rfloor$ the floor function, i.e, the
largest integer smaller than $x$.

\paragraph{Algorithm II: computation of $\xi_n^\alpha (k)$ for $n=0,\ldots,N$
with
$N\le 
\lfloor k \rfloor-1$} 
\begin{enumerate}
\item Set $\eta_{-1}^\alpha(k)=0$ and evaluate   $\eta_{0}^\alpha(k)$ according
to
\eqref{eq:etaalpha_01}--\eqref{eq:eta_01}.
\item Compute $\gamma^\alpha_n(k)$ for $n=1,\ldots, N$ using
\eqref{eq:gammanalphak}-\eqref{eq:gammanalphak01}.

\item For $n=1,2,\ldots,N$, define
\begin{equation}
 \label{eq:02:alg2}
 \eta^\alpha_{n}(k)=\gamma_n^\alpha(k)-\frac{2n}{ik}\eta^\alpha_{n-1}
(k)+\eta^\alpha_{ n-2}
(k).
\end{equation}

\item Set
\[
 \xi^\alpha_{0}(k) = \eta^\alpha_0(k),\qquad 
\xi^\alpha_{n}(k) = \frac{1}{2}\big[\eta^\alpha_n(k)-\eta^\alpha_{n-2}(k)\big],
\quad n=1,\ldots,N.
\]
\end{enumerate}

Observe that we have restricted the range for which this algorithm can be
 used to $N\le k-1$. This is because  the 
recurrence relation \eqref{eq:02:alg2}, as it will be shown in the next 
sections,
is not longer stable for $n>k$. Thus, we have to explore different ways to
compute $\eta^\alpha_n(k)$ when $n\ge k$.

Then assume that $N>\lfloor k\rfloor-1$. 
Note that Algorithm II returns
$\eta_{0}^\alpha(k),\ldots, \eta_{\lfloor k\rfloor-1}^\alpha$. In order to 
compute the
remaining weights we still use \eqref{eq:02:alg2} but rewriting it in a
different way, namely
as a tridiagonal system. (This is the so-called
Oliver method cf. \cite{Oliver1966}). Hence, let
\begin{equation}
\label{eq:matrix_rhs}
A^\alpha_N(k):=
 \begin{bmatrix}
  \frac{2(\lfloor k\rfloor+1)}{ik} & 1\\
-1 &\frac{2(\lfloor k\rfloor+2)}{ik} &1\\
&\ddots&\ddots&\\
&&-1&\frac{2N-2}{ik}
  \end{bmatrix}
\quad {\bf b}^\alpha_{N}(k)=
 \begin{bmatrix}
 \eta^\alpha_{\lfloor k\rfloor-1}+\gamma^\alpha_{\lfloor k\rfloor+1}(k)\\
\gamma^\alpha_{\lfloor k\rfloor+2}(k)\\
\vdots\\
\gamma^\alpha_{N-1}(k) -\eta_{N}^\alpha(k)
 \end{bmatrix}.
\end{equation}
Note that $A^\alpha_N(k)$ is row dominant, which  implies first that the system 
is
uniquely solvable and next suggests that all the calculations
become stable. This will be rigorously proven in next section. 

In short, if $\bm{\eta}$ solves 
\begin{equation}
\label{eq:system}
A^\alpha_N(k)\bm{\eta}= {\bf b}^\alpha_{N}(k),
\end{equation}
necessarily 
\[
 \bm{\eta}=
 \begin{bmatrix}
 \eta^\alpha_{\lfloor k\rfloor}(k) &
\eta^\alpha_{\lfloor k\rfloor+1}(k)& \cdots&
\eta^\alpha_{N-1}(k)
 \end{bmatrix}^\top.
\]
In the definition of the right-hand-side we
find $\eta^\alpha_N(k)$ and 
$\eta^\alpha_{\lfloor k\rfloor-1}$. The latter
is already known.  
Thus, only
the problem of finding $\eta^\alpha_N(k)$ remains open. For these purposes,  as
in \cite{BrHa:2007}, we will use the
Jacobi-Anger expansion
cf.
\cite[\S 2.2]{Watson1995}, \cite[(9.1.44-45)]{AbrSt}:
\[
  \exp(ikx)= J_0(k) + 2\sum_{n=1}^\infty i^n J_{n}(k) T_{n}(x),
\]
where $J_n$ is the Bessel function  of the first kind and order $n$. 
Hence, using \eqref{eq:ProdTnUm}, we derive
\begin{eqnarray*}
\eta^\alpha_{N}(k)&=&\int_{-1}^1 U_{N}(x)\exp(i k x)\log((x-\alpha)^2)\,{\rm
d}x\nonumber\\
&=&J_0(k)\int_{-1}^1
U_N(x)\log((x-\alpha)^2)\,{\rm d}x +2\sum_{m=1}^\infty i^n J_{m}(k)\int_{-1}^1
U_N(x){T_m}(x)\log((x-\alpha)^2)\,{\rm d}x \nonumber\\
&=& J_0(k)\eta^\alpha_N+ 
\sum_{m=1}^N i^m    
J_{m}(k)\big(\eta^\alpha_{N+m}+\eta^\alpha_{N-m} \big) +
\sum_{m=N+1}^\infty i^m     
J_{m}(k)\big(\eta^\alpha_{N+m}-\eta^\alpha_{m-N-2} \big).
\end{eqnarray*}
Observe that the coefficients $\eta_n^\alpha$ can be obtained from
Algorithm I. 
By \eqref{eq:eta_is_bounded}, in order to estimate how many terms are needed to
evaluate this coefficient, we need to estimate how fast $J_M(k)$ decays as
$M\to\infty$. We point out 
cf. \cite[(9.1.10), (9.3.1)]{AbrSt}, 
\begin{equation}
 \label{eq:AsymptBessel}
 J_{M}(k) \approx \frac{1}{M!}\Big(\frac{k}2\Big)^{M}\approx \frac{1}{\sqrt{2\pi
M}}\Big(\frac{ek}{2 M}\Big)^{M} 
\end{equation}
which shows that $ J_{M}(k)$ decreases very fast as $n\to\infty$. In addition, 
it
suggests that taking
$  \approx k $ terms in the series
(\ref{eq:AsymptBessel}), should be enough to
approximate $\eta_{N}(k)$ within the machine precision.

In
our implementation we have taken 
\begin{equation}
 \eta^\alpha_{N}(k)\approx   J_0(k)\eta^\alpha_N(k)+ 
\sum_{n=1}^N i^n     
J_{n}(k)\big(\eta^\alpha_{N+n}-\eta^\alpha_{N-n} \big) +
\sum_{n=N+1}^{M(k)} i^n     
J_{n}(k)\big(\eta^\alpha_{N+n}-\eta^\alpha_{n-N-2}
\big)\label{eq:asymptotic}.
\end{equation}
with $M(k)=25+\lceil e k/2\rceil$ which has demonstrated  to
be
sufficient for our purposes.

\paragraph{Algorithm III: compute $\xi^\alpha_n(k)$ for $n=\lfloor
k\rfloor,\ldots,
N$} 
\begin{enumerate}

\item Construct ${\bf b}^\alpha_N(k)$ using
\begin{enumerate}
\item $\eta^\alpha_{\lfloor k\rfloor-1}(k)$ returned in  Algorithm II

\item $\gamma^\alpha_n(k)$ for $n=\lfloor k\rfloor+1,\ldots, N$ defined in
\eqref{eq:gammanalphak}-\eqref{eq:gammanalphak01}.

\item   $\eta^\alpha_{N}(k)$  evaluated with the sum 
\eqref{eq:asymptotic}. 
\end{enumerate}
\item Construct the tridiagonal matrix $A^\alpha_N(k)$ defined in
\eqref{eq:matrix_rhs} and solve 
\[
A^\alpha_N(k)\bm{\eta}= {\bf b}^\alpha_{N}(k).
\]
Set  
\[
\eta^\alpha_{\lfloor k \rfloor-1+\ell}(k)=(\bm\eta)_{\ell},\qquad
\ell=1,\ldots,
N- \lfloor k \rfloor.
\]
\item Set
\begin{eqnarray*}
\xi_{n}^\alpha(k)&=&\frac{1}{2}\big[\eta^\alpha_n(k)-\eta_{n-2}^\alpha(k)\big],
\qquad
n=\lfloor k\rfloor,\ldots, N.
\end{eqnarray*}
\end{enumerate}

\subsubsection*{On the computational cost}
Certainly, one could use \eqref{eq:asymptotic} for
computing {\em all} the coefficients $(\eta_n^{\alpha}(k))_n$, as it was 
proposed in {\rm \cite{BrHa:2007}} (for $\alpha=0$). However, this choice
results in
a more expensive algorithm. By restricting this approach to the last 
coefficient, and
only if $N>k$, we
can speed up the algorithm since all  the  terms  but the last one, are computed
by solving a tridiagonal system which can be done in  
${\cal O}(N-\lfloor k \rfloor)$ operations by Thomas algorithm. 

The vector  $\left(\gamma_n^\alpha(k)\right)_{n=1}^N$ can be also 
constructed very fast. Hence, note that  the bulk part in 
\eqref{eq:gamma_n_alpha} 
is the convolution of the
vectors
\[
 \begin{pmatrix} T_n(\alpha)\end{pmatrix}_{n=0}^{N-1},\quad 
 \begin{pmatrix} \rho_n \end{pmatrix}_{n=0}^{N-1}
\]
which can be done in ${\cal O}(N\log(N))$ operations by using FFT. (For
$\alpha\in\{-1,0,1\}$ this could be achieved even faster  from
\eqref{eq:gamma_n_alpha}, since $T_n(\pm 1)=(\pm 1)^n$ and $T_n(0)=1$  if $n$
is even and $0$ otherwise).

Another possible bottleneck of the algorithm could be found in the evaluation
of the Bessel functions $J_n(k)$.
Let us show how it can be overcome. We recall that the
Bessel functions obey the recurrence relation 
\begin{equation}
 \label{eq:BesselRecurrence}
 J_{n+1}(k)-\frac{2n}{k}J_n(k)+J_{n-1}(k)=0.
\end{equation}
Notice in pass that it is very similar to that
obtained in \eqref{eq:recurrence} for evaluating our coefficients. Thus,
we can exploit these similarities to get a faster evaluation of these
functions: Once $J_0(k)$ and
$J_1(k)$ are  evaluated by usual methods, 
\eqref{eq:BesselRecurrence}  can be safely used for
evaluating $J_n(k)$ for $n\le \lfloor k\rfloor$. For the remainder values,
i.e. for $n\ge \lfloor
k\rfloor+1$, we  make use of the Oliver approach  and solve 
\[
 \begin{bmatrix}
  -\frac{2(\lfloor k\rfloor+1)}{k} & 1\\
1 &-\frac{2(\lfloor k\rfloor+2)}{k} &1\\
&\ddots&\ddots&\\
&&1&-\frac{2M(k)}{k}
  \end{bmatrix}
 \begin{bmatrix}
J_{\lfloor k\rfloor+1}(k)\\
J_{\lfloor k\rfloor+2}(k)\\
\vdots\\
J_{ M(k)}(k)
 \end{bmatrix}=
 \begin{bmatrix}
 - J_{\lfloor k\rfloor-1}(k)\\
0\\
\vdots\\
-J_{ M(k)+1}(k)
 \end{bmatrix}.
\] 
The asymptotics \eqref{eq:AsymptBessel} can be used to  approximate
$J_{M(k)+1}(k)$,   which gives even better results that setting simply  
$J_{M(k)+1}(k)\approx 0$.
The evaluation turns out to be stable just for the  same reasons  that ensure
the stability of Algorithms II and III (see next section).
 

\section{Numerical stability}
\label{sec:4}

We analyse the stability of the algorithms separately in three
propositions and collect the stability results for  Algorithms II and III, when
they
work together,  in a theorem which ends this section. 

The (usually small) parameter $\varepsilon_j>0$ will be used in this section to
represent any possible  perturbation occurring in the evaluation such as
round-off errors or errors coming from previous computations.

\begin{theorem}\label{theo:stability01}
 Let $\bm{\varepsilon}_N:=\big(\varepsilon_0,\varepsilon_1,\ldots,
\varepsilon_{N}\big)\in\mathbb{\R}^{N+1}$ with $\|\bm{\varepsilon}\|_\infty\le
\varepsilon$ and define the sequence
\begin{eqnarray*}
\eta^{\alpha,\varepsilon}_{-1}&=&\eta^{\alpha }_{-1}=0,\qquad
\eta^{\alpha,\varepsilon}_{0} = \eta^{\alpha}_{0}+\varepsilon_0, \\
\eta^{\alpha,\varepsilon}_{n}
&=& \gamma_n^ \alpha+\frac{2\alpha n}{n+1}\eta_{ n-1 }^{\alpha,\varepsilon}
-\frac{ n-1}{ n+1}\eta_{
n-2}^{\alpha,\varepsilon}
+\varepsilon_n, \quad \text{for }
n=1,2,\ldots. 
\end{eqnarray*}
Then for all $N>0$
\[
|\eta^{\alpha}_{N}-\eta_{N}^{\alpha,\varepsilon}|\le
\bigg[\frac{1}{N+1}\sum_{j=0}^{N} (j+1)|U_{N-j}(\alpha)| \bigg]\varepsilon\le
{\textstyle\frac{1}{6}} (N+2) (N+3)\varepsilon.
\]
\end{theorem}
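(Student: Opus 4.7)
The plan is to reduce the variable–coefficient recurrence satisfied by the error to the constant–coefficient Chebyshev recurrence, and then apply a discrete Duhamel (variation of parameters) formula. Set $d_n := \eta_n^\alpha - \eta_n^{\alpha,\varepsilon}$. Subtracting the exact recurrence from the perturbed one gives $d_{-1}=0$, $d_0=-\varepsilon_0$, and
\[
 d_n = \frac{2\alpha n}{n+1}\,d_{n-1} - \frac{n-1}{n+1}\,d_{n-2} - \varepsilon_n,\qquad n\ge 1.
\]
The coefficients depend on $n$, which is inconvenient, but the key observation is that introducing
\[
 v_n := (n+1)\,d_n
\]
cancels the $n$-dependence: multiplying the recurrence by $(n+1)$ produces
\[
 v_n = 2\alpha\, v_{n-1} - v_{n-2} - (n+1)\,\varepsilon_n,\qquad n\ge 0,
\]
once we extend $v_{-1}=v_{-2}=0$ (a direct check at $n=0$ recovers $v_0=-\varepsilon_0$). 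This is exactly the three–term relation $U_n(\alpha)=2\alpha U_{n-1}(\alpha)-U_{n-2}(\alpha)$ satisfied by the Chebyshev polynomials of the second kind, but with a forcing term.

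Next I would apply the discrete variation of parameters. The homogeneous solution with $v_{-1}=0$, $v_0=1$ is $U_n(\alpha)$, so the discrete Green's function for the Cauchy problem with homogeneous initial data is $G(n,j)=U_{n-j}(\alpha)$ for $j\le n$ (one checks inductively that impulsive forcing $f_j=\delta_{j,m}$ yields $v_n=U_{n-m}(\alpha)$). Superposition then gives the closed form
\[
 v_N = -\sum_{j=0}^{N} U_{N-j}(\alpha)\,(j+1)\,\varepsilon_j,
\]
which, dividing by $N+1$, turns into the explicit error representation
\[
 \eta_N^\alpha - \eta_N^{\alpha,\varepsilon} = -\frac{1}{N+1}\sum_{j=0}^{N}(j+1)\,U_{N-j}(\alpha)\,\varepsilon_j.
\]
Taking absolute values and using $|\varepsilon_j|\le \varepsilon$ gives the first inequality of the theorem.

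For the second (explicit polynomial) bound, I would use the standard pointwise estimate $|U_m(\alpha)|\le m+1$ on $[-1,1]$ (this is \eqref{eq:Tnbounded:02} in the appendix). Then the remaining task is the elementary combinatorial identity
\[
 \sum_{j=0}^{N}(j+1)(N-j+1) = \frac{(N+1)(N+2)(N+3)}{6},
\]
which can be seen, e.g., by reindexing $m=j+1$ and using $\sum_{m=1}^{N+1} m = \tfrac12(N+1)(N+2)$ and $\sum_{m=1}^{N+1} m^2 = \tfrac16(N+1)(N+2)(2N+3)$. Dividing by $N+1$ yields the stated $\tfrac16(N+2)(N+3)\varepsilon$. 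No step here poses a real obstacle; the only non-routine move is the substitution $v_n=(n+1)d_n$, which is what unlocks the Chebyshev structure and allows the error to be written as a single Green's–function convolution.
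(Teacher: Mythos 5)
Your proposal is correct and follows essentially the same route as the paper: both arguments express the error as the discrete convolution $\eta_N^\alpha-\eta_N^{\alpha,\varepsilon}=-\frac{1}{N+1}\sum_{j=0}^N(j+1)U_{N-j}(\alpha)\varepsilon_j$ (the paper verifies the impulse response $\delta_n^{(j)}=\frac{j+1}{n+1}U_{n-j}(\alpha)\varepsilon_j$ directly from the $U_n$ recurrence, while you reach the same formula via the rescaling $v_n=(n+1)d_n$ and a Green's-function superposition), and then both conclude with $|U_m(\alpha)|\le m+1$ and the identity $\sum_{j=0}^N(j+1)(N-j+1)=\frac{1}{6}(N+1)(N+2)(N+3)$. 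The only nitpick is that the bound $\|U_m\|_{L_\infty(-1,1)}=m+1$ is \eqref{eq:Tnbounded}, not \eqref{eq:Tnbounded:02}.
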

\begin{proof}
Clearly, 
\[
 \eta_{N}^\alpha-\eta_{N}^{\alpha,\varepsilon}=
\sum_{j=0}^{N} \delta_N^{(j)}, 
\]
where $\delta_{n}^{(j)}$ is  given by 
\begin{eqnarray}
\delta_{j-1}^{(j)} &:=&0 ,\quad
\delta_{j}^{(j)}  := \varepsilon_j, \qquad
\delta_{n}^{(j)}: = \frac{2\alpha
n}{n+1}\delta_{n-1}^{(j)}-\frac{n-1}{n+1}\delta_{n-2}^{(j)},\quad
n=j+1,j+2,\ldots
\label{eq:02:prop:stability01}
\end{eqnarray}
It is easy to check, using \eqref{eq:TnRecc}, that the solution of the problem
above is given by
\[
 \delta_n^{(j)}=\frac{j+1}{ n+1} U_{n-j}(\alpha) \varepsilon_j.
\]
Therefore, using \eqref{eq:Tnbounded}
\begin{eqnarray*}
 | \eta_{N}^\alpha-\eta_{N}^{\alpha,\varepsilon}|&\le& \frac{1}{N+1}
\bigg[\sum_{j=0}^N (j+1)|U_{N-j}(\alpha)|\bigg]\varepsilon\le \frac{1}{N+1}
\bigg[\sum_{j=0}^N (j+1) (N-j+1)\bigg]\varepsilon\\
&=&
{\textstyle\frac{1}{6}} (N+2) (N+3)\varepsilon.
\end{eqnarray*}
The proof is now finished. 
\end{proof}

\begin{remark} \label{remark:stability01} In view of this result, we  conclude
that theoretically 
$\alpha=0$ turns out to be  the most stable case. Indeed,  since
$U_{2j}(0)=(-1)^j$, 
\[
  | \eta_{2N}^0-\eta_{2N}^{0,\varepsilon}|\le  \frac{\varepsilon}{2N+1}
\sum_{j=0}^N (2j+1)=\frac{(N+1)^2\varepsilon }{2N+1}.
\]
(Note that $\eta_{2j+1}^0=0$ and therefore only $(\eta_{2j}^0)_j$ has to be
considered). 

On the other hand, $\alpha=\pm 1$ are precisely the most unstable cases, since 
$|U_j(\pm 1)|=j+1$. We point out, however, that in practical computation the
algorithm has demonstrated, see 
section 5, that: (a) the computation  is stable for
$\alpha\in[-1,1]$, much better than that theory predicts;
(b) the error observed for $\alpha=0$  is a little
smaller than
that for $\alpha=\pm 1$.
\end{remark}

Next we consider the stability of Algorithms II and III, i.e., the
oscillatory case. 

\begin{proposition}\label{prop:Stability02}
 Let $N\le k-1$ and set 
$\bm{\varepsilon}=(\varepsilon_0,\ldots,\varepsilon_N)\in\mathbb{C}^{N+1}$
with $\|\bm{\varepsilon}\|_{\infty}\le
\varepsilon$ and consider the sequence
\begin{eqnarray*}
\eta^{\alpha,\varepsilon}_{-1}(k)&=&\eta^{\alpha}_{-1}(k)=0,\\
\eta^{\alpha,\varepsilon}_0(k)&=&\eta_0^{\alpha}(k)+\varepsilon_0,\\
\eta^{\alpha, \varepsilon}_{n}(k)&=&\gamma_n^{\alpha}(k)-\frac{2n}{ik}
\eta^{\alpha, \varepsilon}_{n-1} (k)+\eta^{\alpha, \varepsilon}_{n-2}
(k)+\varepsilon_{n},\quad n=1,2,\ldots,N. 
\end{eqnarray*}
Then, for all $0\le n< k-1$.
\begin{equation}
\label{eq:01:prop:Stability02}
|\eta^{\alpha,\varepsilon}_{n}(k)-
{\eta}^\alpha _{n}(k)|\le 
\bigg[1+\frac{4}{3}\frac{(n+1)k^{1/2}}{(k^2-(n+1)^2)^ { 1/4 } }
\bigg]\varepsilon.
\end{equation}
Therefore, for all $n\le k-2$.
\begin{equation}
\label{eq:02:prop:Stability02}
 |\eta^{\alpha,\varepsilon}_{n}(k)-
{\eta}^\alpha _{n}(k)|\le \Big[1+\frac{2^{3/4}}{3} (n+1)^{3/4}k^{1/2}
\Big]\varepsilon,
\end{equation}
whereas for $k-2<n\le k-1$, i.e., for $n=\lfloor k\rfloor-1$, 
\begin{equation}
\label{eq:03:prop:Stability02}
  |\eta^{\alpha,\varepsilon}_{\lfloor k\rfloor-1}(k)-
\eta^\alpha_{\lfloor k\rfloor-1}(k)| \le \big[4+ 2^{7/4}
k^{5/4}\big]\varepsilon.
\end{equation}
\end{proposition}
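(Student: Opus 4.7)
The plan is to analyze how the perturbations $\varepsilon_j$ propagate through the three-term recurrence. Setting $\delta_n := \eta^{\alpha,\varepsilon}_n(k) - \eta^\alpha_n(k)$ and subtracting the two recurrences eliminates the deterministic forcing $\gamma_n^\alpha(k)$, so that $\delta_n$ satisfies
\[
\delta_n + \tfrac{2n}{ik}\delta_{n-1} - \delta_{n-2} = \varepsilon_n, \qquad \delta_{-1}=0, \qquad \delta_0=\varepsilon_0.
\]
As in the proof of Theorem~\ref{theo:stability01}, linearity allows the decomposition $\delta_n=\sum_{j=0}^n\delta_n^{(j)}$, where $\delta_n^{(j)}$ is the impulse response at step $j$: it satisfies the homogeneous recurrence for $n>j$ with $\delta_{j-1}^{(j)}=0$ and $\delta_j^{(j)}=\varepsilon_j$. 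The diagonal contribution $|\delta_n^{(n)}|=|\varepsilon_n|\le\varepsilon$ already accounts for the leading constant "$1$" in the stated bounds.

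The next step is to solve the homogeneous recurrence in closed form via Bessel functions. The substitution $\delta_n = i^n v_n$ converts the recurrence into $v_n - \tfrac{2n}{k}v_{n-1} + v_{n-2} = 0$, which after a unit index shift matches \eqref{eq:BesselRecurrence} with $J_{n+1}(k)$ and $Y_{n+1}(k)$ as independent solutions. Imposing the impulse initial conditions together with the Bessel Wronskian identity $J_{m+1}(k)Y_m(k)-J_m(k)Y_{m+1}(k)=2/(\pi k)$ yields the explicit representation
\[
\delta_n^{(j)} = \tfrac{\pi k}{2}\,\varepsilon_j\,i^{n-j}\bigl[Y_j(k)J_{n+1}(k) - J_j(k)Y_{n+1}(k)\bigr],\qquad 0\le j\le n,
\]
which, at $j=n$, correctly reduces to $\varepsilon_n$ via the Wronskian identity itself.

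The remaining quantitative task is to bound $\sum_{j=0}^{n-1}|\delta_n^{(j)}|$. I would combine Cauchy--Schwarz with the classical Nicholson--Landau inequality $J_m(k)^2+Y_m(k)^2\le 4/(\pi\sqrt{k^2-m^2})$, valid for $0\le m<k$, to obtain $|\delta_n^{(j)}|\le 2k\varepsilon/\bigl[(k^2-j^2)^{1/4}(k^2-(n+1)^2)^{1/4}\bigr]$. Factoring the slowest-decaying term $(k^2-(n+1)^2)^{-1/4}$ out of the sum and estimating $\sum_{j=0}^{n-1}(k^2-j^2)^{-1/4}$ by an integral comparison yields a bound proportional to $(n+1)k^{1/2}/(k^2-(n+1)^2)^{1/4}$, from which \eqref{eq:01:prop:Stability02} follows once the explicit coefficient $4/3$ is tracked.

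The consequence \eqref{eq:02:prop:Stability02} then follows from the algebraic inequality $k^2-(n+1)^2=(k-n-1)(k+n+1)\ge 2(n+1)$, valid whenever $n\le k-2$ (since $k-n-1\ge 1$ and $k+n+1\ge 2(n+1)$), while the boundary case \eqref{eq:03:prop:Stability02} with $n=\lfloor k\rfloor-1$ is more delicate: there $k-\lfloor k\rfloor$ can be arbitrarily small, so the factor $(k^2-(n+1)^2)^{-1/4}$ must be absorbed into a power of $k$, producing the $k^{5/4}$ growth in the constant. The main obstacle I anticipate is keeping the constants sharp through the Nicholson--Landau step and the integral comparison so as to recover exactly the coefficients stated in the proposition, especially in the boundary regime where the Nicholson--Landau estimate degenerates.
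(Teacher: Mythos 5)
Your treatment of \eqref{eq:01:prop:Stability02} reconstructs, in essence, the proof the paper omits: the paper merely subtracts the recurrences to get the homogeneous error equation for $\delta_n$ and then cites \cite[Th.~5.1]{DoGrSm:2010}, whose argument is precisely your impulse-response decomposition, the substitution $\delta_n=i^nv_n$ reducing to the Bessel recurrence \eqref{eq:BesselRecurrence}, the Wronskian-normalised representation $\delta_n^{(j)}=\frac{\pi k}{2}\varepsilon_j i^{n-j}[Y_j(k)J_{n+1}(k)-J_j(k)Y_{n+1}(k)]$, and a Nicholson-type bound on $J_m^2+Y_m^2$. That part is sound, with one bookkeeping caveat you already half-acknowledge: with the constant $4/(\pi\sqrt{k^2-m^2})$ your per-term bound is $2k(k^2-j^2)^{-1/4}(k^2-(n+1)^2)^{-1/4}\varepsilon$, and since $(k^2-j^2)^{-1/4}\ge k^{-1/2}$ the sum over $j=0,\dots,n-1$ is already at least $2nk^{1/2}(k^2-(n+1)^2)^{-1/4}\varepsilon$, which exceeds the target coefficient $\frac43(n+1)k^{1/2}$ for $n\ge 3$. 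So the constants as you have set them up do not close; one needs the sharper normalisation of the cited theorem. Your derivation of \eqref{eq:02:prop:Stability02} from \eqref{eq:01:prop:Stability02} via $k^2-(n+1)^2\ge 2(n+1)$ is exactly the paper's (both in fact produce $2^{7/4}/3$ rather than the $2^{3/4}/3$ printed in the statement, which appears to be a typo there).

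The genuine gap is \eqref{eq:03:prop:Stability02}. Your plan — "absorb $(k^2-(n+1)^2)^{-1/4}$ into a power of $k$" — cannot work: for $n=\lfloor k\rfloor-1$ one has $n+1=\lfloor k\rfloor$, so $k^2-(n+1)^2=(k-\lfloor k\rfloor)(k+\lfloor k\rfloor)$ vanishes when $k$ is an integer and is otherwise uncontrolled from below; \eqref{eq:01:prop:Stability02} is simply vacuous at this index and no amount of massaging turns the divergent factor into $k^{5/4}$. The paper sidesteps the degenerate index entirely by taking one more step of the recurrence: from $\delta_n=-\frac{2n}{ik}\delta_{n-1}+\delta_{n-2}+\varepsilon_n$ one gets $|\delta_n|\le\varepsilon+\frac{2n}{k}|\delta_{n-1}|+|\delta_{n-2}|$ with $\frac{2n}{k}\le 2$, and since $n-1$ and $n-2$ are both $\le k-2$ the already-established bound \eqref{eq:02:prop:Stability02} applies to them, giving $|\delta_{\lfloor k\rfloor-1}|\le[4+2^{7/4}n^{3/4}k^{1/2}]\varepsilon\le[4+2^{7/4}k^{5/4}]\varepsilon$. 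You should replace your absorption step with this one-step fallback; everything else in your outline then goes through.
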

\begin{proof}
As before, it suffices to study the sequence
\begin{eqnarray*}
 \delta_{-1}&=&0, \qquad \delta_0=\varepsilon_0\\
 \delta_n&=&-\frac{2n}{ik}
  \delta _{n-1}+  \delta _{n-2}+\varepsilon_{n},\quad n=1,2,\ldots,N.
\end{eqnarray*}
We refer now to \cite[Th. 5.1]{DoGrSm:2010} where the stability of this
sequence is analysed and whose  proof can be straightforwardly adapted to
derive 
\eqref{eq:01:prop:Stability02} 

To prove \eqref{eq:02:prop:Stability02}, we observe  that
\eqref{eq:01:prop:Stability02} implies that for $n\le k-2$,
\begin{eqnarray*}
 |\eta^{\alpha,\varepsilon}_{n}(k)-
 \eta^\alpha_{n}(k)|&\le& 
\bigg[1+\frac{4}{3}\frac{(n+1)k^{1/2}}{((n+2)^2-(n+1)^2)^ { 1/4 }
}\bigg]\varepsilon< 
\bigg[1+\frac{4}{3}\frac{(n+1)k^{1/2}}{( 2n+3)^{ 1/4 }
}\bigg]\varepsilon\\
&\le& 
\big[1+\frac{2^{7/4}}{3} (n+1)^{3/4}k^{1/2} \big]\varepsilon.
\end{eqnarray*}

If $ n=\lfloor k\rfloor-1$, we can use  \eqref{eq:02:prop:Stability02} as
follows
\begin{eqnarray*}
  |\eta^{\alpha,\varepsilon}_{n}(k)-
 \eta^\alpha_{n}(k)|&\le& \varepsilon+\frac{2n}{k} 
|\eta^{\alpha,\varepsilon}_{n-1}(k)-
{\eta}^\alpha_{n-1}(k)| +  |\eta^{\alpha,\varepsilon}_{n-2}(k)-
{\eta}^\alpha_{n-2}(k)|\\
&\le& \big[4+ 2\cdot \frac{2^{7/4}}3 n^{3/4} k^{1/2} + \frac{2^{7/4}}3
(n-1)^{3/4} 
k^{1/2} \big]\varepsilon\le \big[4+ 2^{7/4} n^{3/4} k^{1/2}\big]\varepsilon\\
&\le & \big[4+  2^{7/4} k^{5/4} \big]\varepsilon. 
\end{eqnarray*}
Bound \eqref{eq:03:prop:Stability02} is now proven.
\end{proof}

The stability of Algorithm III is consequence of the next result.

\begin{proposition}\label{prop:Stability03}
Let  $N>k$ and consider the solutions of the original and perturbed systems
\[
A^\alpha_N(k)\bm{\eta}={\bf b}^\alpha_N(k),\qquad  (A_{N}^\alpha(k)+\Delta
A^\alpha_N(k))\bm{\eta}^\varepsilon={\bf b}^\alpha_N(k) +\Delta {\bf
b}^\alpha_N(k).
\]
Then, if $(k+2)\|\Delta A^\alpha_N(k)\|_\infty<2$, it holds
\[
\|\bm{\eta}^\varepsilon-\bm{\eta}\|_{\infty}\le
\frac{k+2 }{2-(k+2)\|\Delta A^\alpha_N(k)\|_\infty}\big[
\|\Delta{\bf b}^\alpha_N(k)\|_\infty
 +\|\Delta A^\alpha_N(k)\|_\infty\|\bm{\eta}\|_{\infty}
\big].
\]
\end{proposition}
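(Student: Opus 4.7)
The proof will be a standard perturbation analysis for linear systems, the only non-trivial input being an $\ell^\infty$ bound on $(A^\alpha_N(k))^{-1}$ obtained from diagonal dominance. Subtracting $A^\alpha_N(k)\bm\eta=\bm b^\alpha_N(k)$ from $(A^\alpha_N(k)+\Delta A^\alpha_N(k))\bm\eta^\varepsilon=\bm b^\alpha_N(k)+\Delta\bm b^\alpha_N(k)$ and writing $\bm\eta^\varepsilon=\bm\eta+(\bm\eta^\varepsilon-\bm\eta)$ yields
\[
(I+(A^\alpha_N(k))^{-1}\Delta A^\alpha_N(k))(\bm\eta^\varepsilon-\bm\eta) \;=\; (A^\alpha_N(k))^{-1}\bigl(\Delta\bm b^\alpha_N(k)-(\Delta A^\alpha_N(k))\bm\eta\bigr).
\]
Provided $\|(A^\alpha_N(k))^{-1}\|_\infty\|\Delta A^\alpha_N(k)\|_\infty<1$, a Neumann-series bound on the inverse of $I+(A^\alpha_N(k))^{-1}\Delta A^\alpha_N(k)$ together with the triangle inequality gives the classical estimate
\[
\|\bm\eta^\varepsilon-\bm\eta\|_\infty \;\le\; \frac{\|(A^\alpha_N(k))^{-1}\|_\infty}{1-\|(A^\alpha_N(k))^{-1}\|_\infty\|\Delta A^\alpha_N(k)\|_\infty}\bigl(\|\Delta\bm b^\alpha_N(k)\|_\infty+\|\Delta A^\alpha_N(k)\|_\infty\|\bm\eta\|_\infty\bigr).
\]
Comparing with the target inequality, it suffices to prove $\|(A^\alpha_N(k))^{-1}\|_\infty\le(k+2)/2$, since then the hypothesis $(k+2)\|\Delta A^\alpha_N(k)\|_\infty<2$ is exactly the smallness condition and the two fractions coincide.

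\textbf{Bounding $\|(A^\alpha_N(k))^{-1}\|_\infty$ via diagonal dominance.} Let $\bm x=(A^\alpha_N(k))^{-1}\bm y$ and let $i_0$ be an index where $|x_{i_0}|=\|\bm x\|_\infty$. The matrix is tridiagonal with diagonal entries of modulus $|a_{ii}|=2(\lfloor k\rfloor+i)/k\ge 2$ (since $\lfloor k\rfloor+1\ge k$) and off-diagonals equal to $\pm 1$. Applying the triangle inequality to the $i_0$-th equation gives
\[
|a_{i_0 i_0}|\,\|\bm x\|_\infty \;\le\; |y_{i_0}| + \sigma\,\|\bm x\|_\infty,
\]
where $\sigma=1$ if $i_0$ is the first or last row (only one off-diagonal term present) and $\sigma=2$ otherwise. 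A short case analysis using only $\lfloor k\rfloor\ge k-1$ shows $|a_{i_0 i_0}|-\sigma\ge 2/(k+2)$ uniformly. Indeed, for interior rows one has $\lfloor k\rfloor+i_0\ge\lfloor k\rfloor+2\ge k+1$, hence $|a_{i_0 i_0}|-2\ge 2/k\ge 2/(k+2)$; for the first row the identity $|a_{11}|-1=(2\lfloor k\rfloor+2-k)/k$ combined with $\lfloor k\rfloor\ge k-1$ gives $(2\lfloor k\rfloor+2-k)(k+2)\ge k(k+2)\ge 2k$, so the same lower bound holds; and the last row is analogous since $N>k$ forces $2(N-1)/k\ge 2$. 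Rearranging, $\|\bm x\|_\infty\le\tfrac{k+2}{2}\|\bm y\|_\infty$, which is the claimed bound.

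\textbf{Main obstacle.} All other steps are formal manipulations; the only delicate point is the uniform diagonal-dominance margin $|a_{i_0 i_0}|-\sigma\ge 2/(k+2)$. The reason this requires care is that $k$ need not be an integer (so $\lfloor k\rfloor$ can be arbitrarily close to $k-1$ from above) and the boundary rows carry a smaller off-diagonal sum but also a slightly smaller diagonal entry than the interior rows; one must check that the trade-off is controlled uniformly by $2/(k+2)$. Once this bound is secured, inserting it into the perturbation estimate and identifying the smallness hypothesis with the stated one $(k+2)\|\Delta A^\alpha_N(k)\|_\infty<2$ completes the proof.
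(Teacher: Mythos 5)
Your proof is correct and has the same two--step skeleton as the paper's: first the classical perturbation estimate
\[
\|\bm{\eta}^\varepsilon-\bm{\eta}\|_{\infty}\le \frac{\|(A^\alpha_N(k))^{-1}\|_{\infty}}{1-\|(A^\alpha_N(k))^{-1}\|_{\infty}\,\|\Delta A^\alpha_N(k)\|_\infty}\Big[\|\Delta{\bf b}^\alpha_N(k)\|_\infty+\|\Delta A^\alpha_N(k)\|_\infty\|\bm{\eta}\|_\infty\Big],
\]
and then the key bound $\|(A^\alpha_N(k))^{-1}\|_\infty\le (k+2)/2$, after which the stated inequality follows because $t\mapsto t/(1-t\delta)$ is increasing. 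Where you genuinely differ is in how that inverse bound is obtained. The paper factorizes $A^\alpha_N(k)=(I+K_N(k))D_N(k)$ with $D_N(k)$ the (scaled) diagonal part, checks $\|D_N(k)^{-1}\|_\infty<1/2$ and $\|K_N(k)\|_\infty<(k+1)/(k+2)$, and concludes via a Neumann series; you instead apply a Varah--type diagonal--dominance argument directly, reading off the row margin $|a_{i_0i_0}|-\sigma$ at the maximal component. Both routes yield exactly $(k+2)/2$; yours is more elementary (no factorization), while the paper's splitting displays the two factors $1/2$ and $k+2$ more transparently. One small imprecision to repair: for the last row you justify $2(N-1)/k\ge 2$ by ``$N>k$'', but for non-integer $k$ this only gives $N\ge\lfloor k\rfloor+1$, which does not by itself imply $N-1\ge k$. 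What actually saves the step is the structure of the matrix: a last row distinct from the first exists only when $A^\alpha_N(k)$ has at least two rows, in which case its last diagonal index is at least $\lfloor k\rfloor+2>k+1$, so $2(N-1)/k>2+2/k$ and the margin exceeds $2/(k+2)$ with room to spare (the one-row case has $\sigma=0$ and is trivial). With that observation inserted, the argument is complete.
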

\begin{proof} A classical result in stability theory for  
systems of linear equations (see
for instance \cite[Th. 8.4]{MR1007135}) states that 
\begin{equation}
\label{eq:03:prop:Stability03}
 \|\bm{\eta}^\varepsilon-\bm{\eta}\|_{\infty}\le
\frac{\big\|\big(A^\alpha_N(k)\big)^{-1}
\big\|_{\infty}}{1-\|\Delta A^\alpha_N(k)\|_\infty
\big\|\big(A^\alpha_N(k)\big)^{-1}\big\|_{\infty}}
\big[\|\Delta{\bf
b}^\alpha_N(k)\|_{\infty}+\|\Delta
A^\alpha_N(k)\|_\infty\|\bm{\eta}\|_\infty\big]. 
\end{equation}
Thus, we just have to estimate   $\|\big(A^\alpha_N(k)\big)^{-1}\|_\infty$. 
Let 
\begin{eqnarray*}
 D_N(k)&=&\frac{2}{ik}\begin{bmatrix}
                \lfloor k\rfloor+1&\\
&\lfloor k\rfloor+2&\\
&&\ddots&\\
&&&N-1 \end{bmatrix}, \\ 
K_N(k)&=&
\begin{bmatrix}
0&\frac{i k}{2(\lfloor k\rfloor+2)}&\\
-\frac{i k}{2(\lfloor k\rfloor+1)}&0 &\frac{i k}{2(\lfloor k\rfloor+3)}\\
&-\frac{i k}{2(\lfloor k\rfloor+2)}&0&\frac{i k}{2(\lfloor k\rfloor+4)}\\
&&\ddots&\ddots&\ddots\\
&&&\ddots&\ddots&\ddots\\
&&&&-\frac{ik}{2(N-2)}&0
\end{bmatrix}.
\end{eqnarray*}
Let $I$ denote the identity matrix. Clearly, it holds
\[
 A^\alpha_N(k)=(I +K_N(k))D_N(k)\quad \Rightarrow \quad
\big(A^\alpha_N(k)\big)^{-1} =\big(D_N(k)\big)^{-1}(I +K_N(k))^{-1}.
\]
Notice also \begin{eqnarray*}
 \|(D_N(k))^{-1}\|_{\infty}&=&\frac{k}{2(\lfloor k\rfloor+1)}<\frac{1}{2}\\
\|K_N(k)\|_{\infty}&=&\frac{k}{2(\lfloor
k\rfloor+1)}+\frac{k}{2(\lfloor k\rfloor+3)}<\frac12
+\frac{k}{2(k+2)}= \frac{k+1}{k+2}.
\end{eqnarray*}
Collecting these inequalities, we conclude
\begin{equation}
 \label{eq:norm_invA}
 \big\|\big(A^{\alpha}_N(k)\big)^{-1}\big\|\le\frac{
\|(D_N(k))^{-1}\|_{\infty}}{1-\|K_N(k)\|_{\infty}}<                             
\frac{ k+2 } { 2 }.   
\end{equation}
Inserting \eqref{eq:norm_invA}  in
\eqref{eq:03:prop:Stability03} the result is proven.
\end{proof}

The perturbation $\Delta A^\alpha_N(k)$ in the matrix is essentially  
round-off errors. Since $A^\alpha_N(k)$ is a tridiagonal matrix we can safely 
expect 
$(k+2)\|\Delta A^\alpha_N(k)\|_\infty<< 1$.

The last result of this section  states the numerical stability of the
algorithm in the oscillatory case and is result of  combining appropriately
propositions \ref{prop:Stability02} and
\ref{prop:Stability03}. 

\begin{theorem}\label{theo:Stab}
 With the notations of  Propositions
\ref{prop:Stability02}--\ref{prop:Stability03}, it holds
\begin{itemize}
 \item[{\rm (a)}] For any $\nu\in(0,1)$  there exists $C_\nu$, depending only on
$\nu$, so that for $N<\nu k$, 
\begin{equation}
 \label{eq:01:theo:Stab}
 \max_{n=0,\ldots,N }|\eta_n^{\alpha,\varepsilon}(k)-\eta^\alpha_n(k)|\le
C_\nu N 
\varepsilon. 
\end{equation}
with $C_\nu$ independent of $k$ and $N$. 

 \item[{\rm (b)}]  There exists $C>0$ independent of $k$ and $\alpha$ so that 
\begin{equation}
 \label{eq:02:theo:Stab}
 \max_{n=0,\ldots,\lfloor k\rfloor -1
}|\eta^{\alpha,\varepsilon}_n(k)-\eta^\alpha_n(k)|\le
C k^{5/4}\varepsilon.
\end{equation} 
 \item[{\rm (c)}] For $N>k$, if the following conditions are, in addition,
satisfied 
\[
 \|\Delta
A^\alpha_N(k)\|_\infty\le \varepsilon, \qquad \|\Delta {\bf
b}^\alpha_N(k)\|_\infty \le
\varepsilon+|
\eta_{\lfloor k\rfloor-1}^{\alpha}(k)-\eta^{\alpha,\varepsilon}_{\lfloor
k\rfloor-1}(k)|,
\]
and  $\varepsilon<(k+1)^{-1}$, then   it holds 
\begin{eqnarray}
 \label{eq:03:theo:Stab}
 \max_{j=\lfloor k\rfloor-1,\ldots,N
}|\eta^{\alpha,\varepsilon}_j(k)-\eta^\alpha_j(k)|&\le&  (k+2)\big(1+
|
\eta_{\lfloor
k\rfloor-1}^{\alpha}(k)-\eta^{\alpha,\varepsilon}_{\lfloor
k\rfloor-1}(k)|+\|\bm{\eta}\|_\infty \big)\varepsilon\qquad \\
&\le& C' k^{9/4}\varepsilon\label{eq:04:theo:Stab}
\end{eqnarray}
where $C$ is independent of $k$ and $N$.
\end{itemize}
\end{theorem}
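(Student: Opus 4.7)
The plan is to assemble the three claims by applying the appropriate bounds from Propositions~\ref{prop:Stability02}--\ref{prop:Stability03} and paying attention to how the parameters $n$, $k$ and $N$ compare in each regime. Nothing new has to be proved from scratch; the work consists in verifying the simple algebraic manipulations that convert the bounds already derived into the three statements of the theorem.

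For (a), the starting point is the sharp estimate \eqref{eq:01:prop:Stability02} of Proposition~\ref{prop:Stability02}. The hypothesis $N<\nu k$ with $\nu\in(0,1)$ guarantees that $(n+1)$ stays uniformly away from $k$: choosing an auxiliary $\nu'\in(\nu,1)$ one has $n+1\le \nu' k$ provided $k$ is large enough (the finitely many remaining values are trivial), so that $k^{2}-(n+1)^{2}\ge (1-\nu'^{2})k^{2}$ and therefore
\[
\frac{(n+1)k^{1/2}}{(k^{2}-(n+1)^{2})^{1/4}}\le \frac{n+1}{(1-\nu'^{2})^{1/4}}.
\]
Plugging this into \eqref{eq:01:prop:Stability02} gives \eqref{eq:01:theo:Stab} with $C_{\nu}$ depending only on $\nu$. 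For (b), the two cases $n\le k-2$ and $n=\lfloor k\rfloor-1$ are handled separately. The first uses \eqref{eq:02:prop:Stability02} together with the trivial bound $(n+1)^{3/4}\le k^{3/4}$, giving $(n+1)^{3/4}k^{1/2}\le k^{5/4}$; the second is exactly \eqref{eq:03:prop:Stability02}. Taking the maximum yields \eqref{eq:02:theo:Stab}.

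For (c), I would apply Proposition~\ref{prop:Stability03} with $\|\Delta A^{\alpha}_{N}(k)\|_{\infty}\le\varepsilon$ and $\|\Delta\mathbf{b}^{\alpha}_{N}(k)\|_{\infty}\le\varepsilon+|\eta^{\alpha}_{\lfloor k\rfloor-1}(k)-\eta^{\alpha,\varepsilon}_{\lfloor k\rfloor-1}(k)|$. The hypothesis $\varepsilon<(k+1)^{-1}$ ensures $(k+2)\varepsilon<(k+2)/(k+1)\le 3/2$ (for $k\ge 1$), so the denominator $2-(k+2)\|\Delta A^{\alpha}_{N}(k)\|_{\infty}$ in Proposition~\ref{prop:Stability03} is bounded below by a positive absolute constant; collecting the resulting factors produces the form of \eqref{eq:03:theo:Stab}. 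Finally, \eqref{eq:04:theo:Stab} follows by substituting the $O(k^{5/4}\varepsilon)$ bound from part (b) for the $\lfloor k\rfloor-1$ error and using $\|\bm{\eta}\|_{\infty}\le C$, which is an immediate consequence of the uniform bound \eqref{eq:eta_is_bounded} on the exact coefficients $\eta^{\alpha}_{n}(k)$.

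The proof is essentially bookkeeping, and I do not foresee any conceptual obstacle. The only point requiring care is the transition between parts (b) and (c): one has to track carefully that the error $|\eta^{\alpha}_{\lfloor k\rfloor-1}(k)-\eta^{\alpha,\varepsilon}_{\lfloor k\rfloor-1}(k)|$ enters the right-hand side of Proposition~\ref{prop:Stability03} without being multiplied by $\varepsilon$, which is why the final $k^{9/4}$ power in \eqref{eq:04:theo:Stab} arises as the product $(k+2)\cdot k^{5/4}$ of the stability constant of the tridiagonal system and the propagated error at the boundary index $n=\lfloor k\rfloor-1$. Everything else is a matter of combining the already established pieces.
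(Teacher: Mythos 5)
Your proof is correct and follows the same route as the paper, which disposes of the theorem in one line by citing Propositions \ref{prop:Stability02} and \ref{prop:Stability03} together with the uniform bound \eqref{eq:eta_is_bounded}; you have simply filled in the elementary algebra (the $k^2-(n+1)^2\ge(1-\nu'^2)k^2$ estimate for (a), the split $n\le k-2$ versus $n=\lfloor k\rfloor-1$ for (b), and the lower bound on the denominator in Proposition \ref{prop:Stability03} for (c)) that the paper omits. Your closing remark is well taken: for \eqref{eq:04:theo:Stab} to come out as $k^{9/4}\varepsilon$ the boundary error at $n=\lfloor k\rfloor-1$ must indeed enter through $\|\Delta{\bf b}^\alpha_N(k)\|_\infty$ without an extra factor of $\varepsilon$, which is exactly how you read it and is the intended meaning of \eqref{eq:03:theo:Stab}.
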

\begin{proof} Estimates \eqref{eq:01:theo:Stab}-\eqref{eq:02:theo:Stab}
follow from Proposition \ref{prop:Stability02}.  For item  (c) 
 \eqref{eq:03:theo:Stab} is a
consequence of  Proposition \ref{prop:Stability03}. Finally,
\eqref{eq:04:theo:Stab} is obtained by applying \eqref{eq:02:theo:Stab}, which
bounds the last term in
 \eqref{eq:03:theo:Stab},  and using that  $\|\bm{\eta}\|_\infty$ is
uniformly bounded independent of $N$ and $\alpha$ cf. \eqref{eq:eta_is_bounded}.
\end{proof}

Let us emphasise that, in our computations, 
\eqref{eq:04:theo:Stab} has been demonstrated to be very pessimistic. 

\begin{table}
 \[ 
\begin{array}{l|c |c}
&  e^0_{\rm abs}(n)& e^0_{\rm rel}(n)\\
\hline&&\\
n=10&     1.11{\rm E}{-16} &  4.33{\rm E}{-16}\\
n=200&     1.11{\rm E}{-16} &  1.07{\rm E}{-15}\\
n=400&    1.11{\rm E}{-16} &  1.31{\rm E}{-15}
  \end{array}               
 \]
\caption{\label{tab:01} Absolute and relative errors in $\xi^0_n$ for different
values of
$n$}

 \[ 
\begin{array}{l|c |c}
&  e^1_{\rm abs}(n)& e^1_{\rm rel}(n)\\
\hline&&\\
n=10&     5.83{\rm E}{-16} &  3.70{\rm E}{-14}\\
n=200&    5.83{\rm E}{-16} &  7.92{\rm E}{-14}\\
n=400&    5.83{\rm E}{-16} &  3.90{\rm E}{-13}
  \end{array}               
 \]
\caption{\label{tab:01b} Absolute and relative errors $\xi^1_n$ for different
values of
$n$}
\end{table}

\section{Numerical Experiments}

We collect in this section some numerical experiments to illustrate the
theoretical results presented in this paper. The implementation of the rule, for
$\alpha=0,-1$ is available in \cite{web}.

\label{sec:5}
\subsection{Stability for $\xi_n^\alpha$}

We have computed here $\xi_{n}^\alpha$ for $n=0,\ldots, 100$ using the
implementation of our method in Matlab. Next, we compare the numerical results 
with that
obtained using symbolic calculations in Mathematica, which will be denoted
by  $\xi_{n}^{\alpha, \rm Symb}$. The evaluation of these expressions is done  
using (very) high arithmetic precision to keep the round-off errors well below
the significant digits returned in our implementation in Matlab. 

We present
\[
 e^\alpha_{\rm abs}(N)=\max_{n=0,\ldots, N} |\xi^\alpha_{{n}}-\xi_{{n}}^{\alpha, \rm
Symb}|,\qquad
 e^\alpha_{\rm rel}(N)=\max_{n=0,\ldots, N}
\frac{|\xi^\alpha_{{n}}-\xi_{{n}}^{\alpha,\rm Symb}|}{|\xi_{{n}}^{\alpha,\rm Symb}| }
\]
in Tables  \ref{tab:01} (for $\alpha=0$) and  \ref{tab:01b} (for $\alpha=1$)
for different values of $n$.  We clearly see that for all $n$ the  error is very
close to the machine's unit round off and that the
results returned for $\alpha=0$ are slightly better than that for $\alpha=1$.
This should indicate that
Theorem \ref{theo:stability01} is sharp (see also Remark
\ref{remark:stability01}).

\subsection{Stability for $\xi^\alpha_n(k)$}

As before, we compare here the values of $
 \eta^\alpha_n(k)$ computed by our code with that returned by   Mathematica. The
results are shown for $\alpha=0$ in  Table \ref{tab:03}
and for $\alpha=1$ in Table \ref{tab:04}.

\begin{table}
 \[ 
\begin{array}{r|c |c|c |c|c }
    N\setminus k   & 10&20&40&80&160\\
\hline&&&&&\\
  1 & 1.39{\rm E}{-17}  &  1.04{\rm E}{-17} &  1.30{\rm E}{-18}  & 
4.34{\rm E}{-19} & 2.71{\rm E}{-020} \\
 10 & 1.33{\rm E}{-15}  &  2.22{\rm E}{-16} &  2.78{\rm E}{-17}  & 
2.78{\rm E}{-17}  & 0.00   \\
 20 & 1.67{\rm E}{-16}  &  6.66{\rm E}{-16} &  0.00 & 2.78{\rm E}{-17}  &  6.94{\rm E}{-18}  \\  
 40 & 2.78{\rm E}{-17}  &  1.39{\rm E}{-16} &  1.11{\rm E}{-15}  & 
4.16{\rm E}{-17}  & 0.00   \\
 80 & 2.78{\rm E}{-17}  &  1.39{\rm E}{-17} &  5.55{\rm E}{-17}  & 
1.11{\rm E}{-15}  &  2.08{\rm E}{-17}    \\
160 & 0.00 &  1.39{\rm E}{-17} &  2.78{\rm E}{-17}  & 7.63{\rm E}{-17}  &  1.55{\rm E}{-15}   
  \end{array}               
 \]
 \[ 
\begin{array}{r|c |c|c |c|c }
   N\setminus k & 10&20&40&80&160\\
\hline&&&&&\\
 1 & 1.58{\rm E}{-16}  &  1.64{\rm E}{-15} &  6.18{\rm E}{-16}  & 
2.70{\rm E}{-16}  &  1.28{\rm E}{-16}    \\
10 & 6.48{\rm E}{-16}  &  4.63{\rm E}{-16} &  1.63{\rm E}{-16}  & 
3.42{\rm E}{-16}  &  0.00   \\
20 & 3.91{\rm E}{-16}  &  3.61{\rm E}{-16} &  0.00 &  3.14{\rm E}{-16}  &  1.76{\rm E}{-16}    \\
40 & 1.65{\rm E}{-16}  &  6.60{\rm E}{-16} &  6.76{\rm E}{-16}  & 
3.56{\rm E}{-16}  &  0.00   \\
80 & 3.48{\rm E}{-16}  &  1.66{\rm E}{-16} &  5.30{\rm E}{-16}  & 
7.66{\rm E}{-16}  &  8.18{\rm E}{-16}\\   
{-16} & 0.00 &  3.48{\rm E}{-16} &  6.63{\rm E}{-16}  &  1.46{\rm E}
{-15} &  1.23{\rm E}{-15}  
  \end{array}               
 \]
 \caption{\label{tab:03} Absolute (top)   and  relative (below) error in
computing $\xi^0_n(k)$}
\end{table}

\begin{table}
 \[ 
\begin{array}{r|c |c|c |c|c }
 N\setminus k   & 10&20&40&80&160\\
\hline&&&&&\\
  1 & 3.86{\rm E}{-16}  &  1.39{\rm E}{-17} &  2.95{\rm E}{-16}  & 
2.78{\rm E}{-17}  & 1.39{\rm E}{-17}  \\  
 10 & 2.54{\rm E}{-15}  &  2.24{\rm E}{-16} &  4.79{\rm E}{-16}  & 
2.86{\rm E}{-17}  &  1.55{\rm E}{-17}  \\  
 20 & 2.22{\rm E}{-17}  &  1.56{\rm E}{-15} &  1.25{\rm E}{-15}  & 
2.08{\rm E}{-17}  & 4.39{\rm E}{-17}  \\  
 40 & 1.03{\rm E}{-16}  &  3.71{\rm E}{-17} &  4.10{\rm E}{-15}  & 
1.67{\rm E}{-16}  &  1.12{\rm E}{-16}  \\  
 80 & 1.81{\rm E}{-17}  &  6.35{\rm E}{-17} &  2.70{\rm E}{-17}  & 
1.60{\rm E}{-15}  &  1.31{\rm E}{-16}  \\  
160 & 1.32{\rm E}{-17}  &  1.86{\rm E}{-17} &  1.00{\rm E}{-16}  & 
5.02{\rm E}{-16}  & 8.68{\rm E}{-16}     
  \end{array}               
 \]
 \[ 
\begin{array}{r|c |c|c |c|c }
  N\setminus k  & 10&20&40&80&160\\
\hline&&&&&\\
  1 & 5.47{\rm E}{-16}  &  3.05{\rm E}{-17} &  1.35{\rm E}{-15}  & 
1.89{\rm E}{-16}  &  2.15{\rm E}{-16}  \\  
 10 & 4.55{\rm E}{-15}  &  5.19{\rm E}{-16} &  2.00{\rm E}{-15}  & 
1.87{\rm E}{-16}  & 1.95{\rm E}{-16}  \\  
 20 & 7.89{\rm E}{-16}  &  4.42{\rm E}{-15} &  3.57{\rm E}{-15}  & 
1.04{\rm E}{-16}  & 4.97{\rm E}{-16}  \\  
 40 & 1.18{\rm E}{-14}  &  3.84{\rm E}{-15} &  1.89{\rm E}{-14}  & 
4.12{\rm E}{-16}  & 5.95{\rm E}{-16}  \\  
 80 & 6.94{\rm E}{-15}  &  2.23{\rm E}{-14} &  9.93{\rm E}{-15}  & 
1.21{\rm E}{-14}  & 6.56{\rm E}{-16}  \\  
160 & 1.73{\rm E}{-14}  &  2.28{\rm E}{-14} &  1.27{\rm E}{-13}  & 
6.02{\rm E}{-13}  & 1.01{\rm E}{-14}   
  \end{array}               
 \]
 \caption{\label{tab:04} Absolute (top)   and  relative (below) error in
computing $\xi^1_n(k)$.}
\end{table}

It is worth  mentioning that in our 
implementation in Matlab we face  an annoying bug. Algorithm II (and
therefore indirectly Algorithm  III)
makes use of the sine and cosine integral  functions ($\Si$ and
$\Ci$ in our notation) just for evaluating the first
coefficient $\eta_0^\alpha(k)$. 
These functions are only included in Matlab as part of  the symbolic
toolbox,  and therefore it is not presented in all distributions. Moreover, any
call to these functions consumes a significant CPU time because of the own 
nature
of the symbolic toolbox. Hence, in
some of our experiments we observed that when using the built-in functions
almost half of the CPU time was consumed in performing these two evaluations.  

Thus we wrote our own implementation for sine and cosine
integral functions. The evaluation is accomplished by a combination of
asymptotic expansion for large arguments \cite[(5.2.34)-(5.2.35)]{AbrSt} and a
sum of Bessel functions of
fractional
order for small and moderate arguments \cite[(5.2.15)]{AbrSt}. Despite our
efforts, our code introduces a very small error in the last or in the
last but one significant digits. However, such errors only  affect 
the first
few  coefficients very slightly and do not propagates to the rest of 
coefficients.
Hence, it gives us a unwanted proof of the stability
of the algorithm.

\begin{table}[p]
\[
\begin{array}{r|c |c|c |c|c |c |c }
   N\setminus k     &   0 &   10 &   10^2 & 10^3 &10^4&10^5 \\ 
\hline&&&&&&\\       
 11 & 1.71{\rm E}{-03} &  4.00{\rm E}{-03} &  1.75{\rm E}{-04} & 
1.82{\rm E}{-05} & 1.83{\rm E}{-06} &  1.83{\rm E}{-07} \\
 12 & 4.56{\rm E}{-05} &  3.28{\rm E}{-04} &  1.44{\rm E}{-06} & 
1.37{\rm E}{-08} & 1.37{\rm E}{-10} & 1.37{\rm E}{-12} \\
 23 & 1.65{\rm E}{-08} &  2.56{\rm E}{-08} &  4.80{\rm E}{-09} & 
3.89{\rm E}{-10} & 3.80{\rm E}{-11} &  3.80{\rm E}{-12} \\
 24 & 2.96{\rm E}{-10} &  8.24{\rm E}{-09} &  9.93{\rm E}{-10} & 
9.09{\rm E}{-12} &  9.09{\rm E}{-14} & 9.08{\rm E}{-16} \\
 47 & 6.66{\rm E}{-16} &  1.11{\rm E}{-16} &  8.97{\rm E}{-17} & 
1.29{\rm E}{-17} & 1.08{\rm E}{-19} &  1.36{\rm E}{-020} \\
 48 & 6.66{\rm E}{-16} &  2.73{\rm E}{-16} &  8.85{\rm E}{-17} & 
1.26{\rm E}{-17} & 1.08{\rm E}{-19} &  2.71{\rm E}{-020} 
\end{array}
\] 
\[
\begin{array}{r|c |c|c |c|c |c |c }
    N\setminus k   &   0 &   10 &   10^2 & 10^3 &10^4&10^5 \\ 
\hline&&&&&&\\
 11 & 9.38{\rm E}{-04} &  5.49{\rm E}{-03} &  2.78{\rm E}{-03} & 
2.89{\rm E}{-03} &  2.91{\rm E}{-03} &  2.91{\rm E}{-03} \\
 12 & 2.50{\rm E}{-05} &  4.50{\rm E}{-04} &  2.28{\rm E}{-05} & 
2.18{\rm E}{-06} &  2.17{\rm E}{-07} & 2.18{\rm E}{-08} \\
 23 & 9.04{\rm E}{-09} &  3.51{\rm E}{-08} &  7.61{\rm E}{-08} & 
6.20{\rm E}{-08} &  6.05{\rm E}{-08} &  6.04{\rm E}{-08} \\
 24 & 1.63{\rm E}{-10} &  1.13{\rm E}{-08} &  1.58{\rm E}{-08} & 
1.45{\rm E}{-09} & 1.45{\rm E}{-10} &  1.45{\rm E}{-11} \\
 47 & 3.66{\rm E}{-16} &  1.52{\rm E}{-16} &  1.42{\rm E}{-15} & 
2.05{\rm E}{-15} & 1.73{\rm E}{-16} &  2.17{\rm E}{-16} \\
 48 & 3.66{\rm E}{-16} &  3.75{\rm E}{-16} &  1.40{\rm E}{-15} & 
2.01{\rm E}{-15} & 1.73{\rm E}{-16} &  4.32{\rm E}{-16} 
\end{array} 
\]
\caption{Absolute (top) and relative (bottom) errors for integral \eqref{eq:exp5} with
$\alpha=0$\label{table:intalpha0}}

\[
\begin{array}{l|c |c|c |c|c |c |c }
 N\setminus k      & 0 & 10 &  10^2 & 10^3 & 10^4& 10^5 \\ 
\hline&&&&&&\\
11& 1.81{\rm E}{-05} &  8.89{\rm E}{-04} &  3.04{\rm E}{-05} & 
5.04{\rm E}{-07} &  6.33{\rm E}{-09} & 7.90{\rm E}{-11} \\
12 & 2.43{\rm E}{-06} &  7.72{\rm E}{-05} &  8.94{\rm E}{-06} & 
1.74{\rm E}{-07} &  1.77{\rm E}{-09} & 2.15{\rm E}{-11} \\
23  & 4.21{\rm E}{-11} &  2.60{\rm E}{-11} &  1.50{\rm E}{-09} & 
5.51{\rm E}{-12} & 1.25{\rm E}{-13} & 1.48{\rm E}{-15} \\
24 & 5.25{\rm E}{-11} &  4.91{\rm E}{-11} &  1.89{\rm E}{-09} & 
1.84{\rm E}{-11} &  2.81{\rm E}{-13} & 3.55{\rm E}{-15} \\
47 & 1.04{\rm E}{-18} &  7.85{\rm E}{-17} &  9.22{\rm E}{-17} & 
2.47{\rm E}{-17} &  2.09{\rm E}{-18} &  1.10{\rm E}{-19} \\
48 & 7.31{\rm E}{-17} &  8.89{\rm E}{-17} &  9.17{\rm E}{-17} & 
2.17{\rm E}{-17} &  1.89{\rm E}{-18} &  1.12{\rm E}{-19} 
\end{array} 
\]
\[
\begin{array}{l|c |c|c |c|c |c |c }
 N\setminus k      &   0 &   10 &   10^2 & 10^3 &10^4&10^5 \\ 
\hline&&&&&&\\
11 & 8.09{\rm E}{-04} &  3.07{\rm E}{-03} &  1.00{\rm E}{-03} & 
1.71{\rm E}{-04} & 1.26{\rm E}{-05} & 1.27{\rm E}{-06} \\
12 & 1.09{\rm E}{-04} &  2.67{\rm E}{-04} &  2.94{\rm E}{-04} & 
5.92{\rm E}{-05} &  3.53{\rm E}{-06} & 3.45{\rm E}{-07} \\ 
23 &
1.89{\rm E}{-09} &  9.00{\rm E}{-11} & 4.92{\rm E}{-08} & 1.87
{\rm E}{-09} & 2.49{\rm E}{-10} &  2.39{\rm E}{-11} \\ 
24 & 2.30{\rm E}{-09} & 1.70{\rm E}{-10} & 6.22{\rm E}{-08} & 6.26{\rm E}{-09} &
5.61{\rm E}{-10} & 5.72{\rm E}{-11} \\  47 & 4.67{\rm E}{-17} &
2.71{\rm E}{-16} & 3.03{\rm E}{-15} & 8.38{\rm E}{-15} & 4.17
{\rm E}{-15} &  1.77{\rm E}{-15} \\ 
 48 & 3.27{\rm E}{-15} &  3.07{\rm E}{-16} &  3.02{\rm E}{-15} & 
7.37{\rm E}{-15} &  3.78{\rm E}{-15} &  1.80{\rm E}{-15} 
\end{array} 
\]
\caption{Absolute (top) and relative (bottom) errors for integral \eqref{eq:exp5} with
$\alpha=1$\label{table:intalpha1}}
\end{table}

\subsection{Experiments for  an oscillatory integral }

Let 
\begin{equation}\label{eq:exp5}
I_{\alpha}(k):=\int_{-1}^1  \frac{\cos(4
x)}{x^2+x+1}\log\big((x-\alpha)^2\big)\, \exp(ik x)\,{\rm d}x
\end{equation}
We have computed the errors returned by our numerical method for different
values of $k$, $N$ and for $\alpha=0$ (Table \ref{table:intalpha0}) and
$\alpha=1$ (Table
\ref{table:intalpha1}). As exact value we just have used that returned by the rule
when  a huge number of points is used. 

Several facts can be observed right from the beginning. First, the convergence
is very fast: with modest values of $N$  we get approximations with an error of
the same order  as the round-off unity. Second, if we read  Table  \ref{table:intalpha0} 
by rows, we
clearly see that  {for $\alpha=0$, and even $N$, the error decreases as $k^{-2}$. For odd $N$, however,  the errors of the rule decay only as $k^{-1}$.
(And therefore, the relative error keeps bounded independent of $k$ in this last case)}. 

Such phenomenon doest not occur when $\alpha=1$, i.e, when the logarithmic
singularity occurs at the end of the interval (See Table
\ref{table:intalpha1}): {The error for fixed even or odd $N$ decays as 
$k^{-2}$}.

\begin{table}[p]\small
\[
\begin{array}{l|c |c|c |c|c |c |c }
 N\setminus k    &   0 &1&   10 &   10^2 & 10^3 &10^4&10^5 \\ 
\hline&&&&&&&\\
11  &   2.59{\rm E}-03    &    2.59{\rm E}-03   &   6.90{\rm E}-03    &    2.80{\rm E}-04&   
2.81{\rm E}-05    &    2.83{\rm E}-06&    
2.83{\rm E}-07 \\ 
   12  &   1.72{\rm E}-04    &    1.72{\rm E}-04   &   4.82{\rm E}-03    &    3.38{\rm E}-05&   
3.23{\rm E}-07    &    2.97{\rm E}-09&   
 2.92{\rm E}-11 \\ 
   23  &   2.86{\rm E}-04    &    2.86{\rm E}-04   &   2.96{\rm E}-04    &    4.80{\rm E}-05&   
6.48{\rm E}-06    &    6.56{\rm E}-07&   
 6.57{\rm E}-08 \\ 
   24  &   1.07{\rm E}-05    &    1.07{\rm E}-05   &   1.29{\rm E}-04    &    2.24{\rm E}-05&   
1.90{\rm E}-07    &    1.53{\rm E}-09&   
 1.50{\rm E}-11 \\ 
   47  &   3.36{\rm E}-05    &    3.36{\rm E}-05   &   3.18{\rm E}-05    &    3.04{\rm E}-05&   
1.50{\rm E}-06    &    1.57{\rm E}-07&   
 1.58{\rm E}-08 \\ 
   48  &   6.64{\rm E}-07    &    6.64{\rm E}-07   &   6.91{\rm E}-06    &    1.20{\rm E}-05&   
1.05{\rm E}-07    &    7.61{\rm E}-10&   
 7.78{\rm E}-12 \\ 
   95  &   4.07{\rm E}-06    &    4.07{\rm E}-06   &   3.91{\rm E}-06    &    5.62{\rm E}-05&   
4.14{\rm E}-07    &    3.84{\rm E}-08&   
 3.86{\rm E}-09 \\ 
   96  &   4.13{\rm E}-08    &    4.13{\rm E}-08   &   4.17{\rm E}-07    &    9.36{\rm E}-05&   
5.52{\rm E}-08    &    3.03{\rm E}-10&   
 4.05{\rm E}-12
\end{array}
\]
\[
\begin{array}{l|c |c|c |c|c |c |c }
 N\setminus k    &   0 &1&   10 &   10^2 & 10^3 &10^4&10^5 \\ 
\hline&&&&&&&\\
   11  &   5.07{\rm E}-05    &    5.05{\rm E}-05   &   1.34{\rm E}-04    &    5.33{\rm E}-06&   
5.51{\rm E}-07    &    5.56{\rm E}-08& 
 5.56{\rm E}-09 \\ 
   12  &   2.66{\rm E}-06    &    2.65{\rm E}-06   &   8.00{\rm E}-05    &    4.74{\rm E}-07&   
4.72{\rm E}-09    &    4.78{\rm E}-11& 
 4.76{\rm E}-13 \\ 
   23  &   1.32{\rm E}-06    &    1.32{\rm E}-06   &   1.37{\rm E}-06    &    2.34{\rm E}-07&   
2.99{\rm E}-08    &    3.03{\rm E}-09& 
 3.03{\rm E}-10 \\ 
   24  &   4.41{\rm E}-08    &    4.41{\rm E}-08   &   5.47{\rm E}-07    &    9.01{\rm E}-08&   
6.59{\rm E}-10    &    6.27{\rm E}-12&   
 6.17{\rm E}-14 \\ 
   47  &   3.74{\rm E}-08    &    3.74{\rm E}-08   &   3.55{\rm E}-08    &    3.35{\rm E}-08&   
1.69{\rm E}-09    &    1.76{\rm E}-10& 
 1.76{\rm E}-11 \\ 
   48  &   7.01{\rm E}-10    &    7.01{\rm E}-10   &   7.38{\rm E}-09    &    1.25{\rm E}-08&   
1.01{\rm E}-10    &    8.11{\rm E}-13&  
 7.83{\rm E}-15 \\ 
   95  &   1.11{\rm E}-09    &    1.11{\rm E}-09   &   1.07{\rm E}-09    &    1.54{\rm E}-08&   
1.12{\rm E}-10    &    1.05{\rm E}-11& 
 1.06{\rm E}-12 \\ 
   96  &   1.10{\rm E}-11    &    1.10{\rm E}-11   &   1.12{\rm E}-10    &    2.50{\rm E}-08&   
1.36{\rm E}-11    &    9.53{\rm E}-14& 
 9.99{\rm E}-16 \\ 
\end{array}                                        
\]
\caption{\label{Tab:06:01}Errors of the quadrature rule for integral \eqref{eq:exp:06a} with 
$\alpha=0$, $\beta=1/2$ (top) and $\beta=3/2$ (bottom)}
\end{table}

\subsection{Non-smooth functions}

In this last experiment we run our code to compute
\begin{subequations}
\label{eq:exp:06}
\begin{eqnarray}
 I_1(k,\alpha,\beta)&:=&\int_{-1}^1(1+x)^\beta\log\big((x-\alpha)^2\big)\exp(i k x)\,{\rm d}x, 
\label{eq:exp:06a}\\
 I_0(k,\alpha,\beta)&:=&\int_{-1}^1|1/2+x|^\beta\log\big((x-\alpha)^2\big)\exp(i k x)\,{\rm d}x, 
\label{eq:exp:06b}
\end{eqnarray}
\end{subequations}
for $\alpha\in\{-1,0\}$ and $\beta\in\{1/2,3/2\}$ to analyse how precise are the regularity 
assumptions in the hypothesis Theorem \ref{theo:Conv}. 

We  expect the convergence of
the rule  to be faster for the first integral since, after 
performing the cosine change of variables, $|1+\cos \theta |^\beta\in 
H^{2\beta+1/2-\varepsilon}_\#$ whereas $|1/2+\cos \theta |^\beta\in 
H^{\beta+1/2-\varepsilon}_\#$. The regularity of the 
transformed function is precisely what  appears in the estimate of Theorem \ref{theo:Conv} 
(function $f_c$ in the right-hand-side). 

We show in Tables \ref{Tab:06:01}-\ref{Tab:06:04} the error of the rule for different values of 
$k$ and $N$. (The 
{\em exact} integral was computed by using the Clenshaw-Curtis rule on graded meshes towards the 
singular points,  cf. \cite{KiDoGrSm:09}). Clearly, the errors are in almost all cases smaller for 
\eqref{eq:exp:06a} than for \eqref{eq:exp:06b}.

It is difficult to estimate the order of convergence of the rule because it becomes chaotic as $k$ 
increases in such a way that the larger is $k$, the bigger has to be $N$ to make the error 
decay steady to zero.   
{Hence, the results
in  the first columns of Table \ref{Tab:06:01}-\ref{Tab:06:02} point out to a convergence of
order $4$ and $6$, approximately, for $\beta=1/2$ and $\beta=3/2$, 
much higher than that the theory predicts, $1.5$ and $3.5$ respectively. On the other hand, 
the results in Tables \ref{Tab:06:03}-\ref{Tab:06:04}, corresponding to the integral \eqref{eq:exp:06b}, 
suggest that the rules converges with order $1.5$ and $2.5$, which should be compare with that derived
from our results, $1$ and $2$. This could indicate that the convergence results proven in this paper
can be somewhat pessimistic.}


If we read the table by rows, we can detect that the 
${\cal O}(k^{-2})$ decay of the error occurs only in Table 
\ref{Tab:06:01} and in \ref{Tab:06:03} for $\beta=3/2$. Only for the first integral 
\eqref{eq:exp:06a} with $\beta=3/2$, this property has been rigorously proved 
since in the notation of  Theorem \ref{theo:Conv}  $f_c\in H^{7/2-\varepsilon}_\#$. There  is 
however no theoretically justification for the other cases and it certainly deserves more attention 
to study if the regularity assumptions 
can be relaxed for $\alpha=0$.

On the other hand, the error  does not behave as ${\cal O}(k^{-2})$ in Table 
\ref{Tab:06:02} although for $\beta=3/2$ Theorem \ref{theo:Conv} should imply such decay of 
the error. We think that the very irregular convergence of the rule in this case could force $N$ 
and $k$  to be larger to observe it.

\begin{table}[p]\small
\[
\begin{array}{l|c |c|c |c|c |c |c }
 N\setminus k    &   0 &1&   10 & 10^2 & 10^3 &10^4&10^5 \\ 
\hline&&&&&&&\\
   11  &   4.02{\rm E}-03    &    4.02{\rm E}-03   &   1.86{\rm E}-03    &    4.99{\rm E}-03&   
3.34{\rm E}-04    &    1.56{\rm E}-05&    6.38{\rm E}-07 \\ 
   12  &   3.15{\rm E}-03    &    3.15{\rm E}-03   &   3.54{\rm E}-03    &    4.73{\rm E}-03&   
3.29{\rm E}-04    &    1.56{\rm E}-05&    6.37{\rm E}-07 \\ 
   23  &   5.11{\rm E}-04    &    5.11{\rm E}-04   &   5.11{\rm E}-04    &    2.17{\rm E}-03&   
2.79{\rm E}-04    &    1.48{\rm E}-05&    6.27{\rm E}-07 \\ 
   24  &   4.54{\rm E}-04    &    4.54{\rm E}-04   &   4.53{\rm E}-04    &    1.96{\rm E}-03&   
2.74{\rm E}-04    &    1.47{\rm E}-05&    6.26{\rm E}-07 \\ 
   47  &   6.83{\rm E}-05    &    6.83{\rm E}-05   &   6.83{\rm E}-05    &    2.20{\rm E}-04&   
1.88{\rm E}-04    &    1.32{\rm E}-05&    6.04{\rm E}-07 \\ 
   48  &   6.43{\rm E}-05    &    6.43{\rm E}-05   &   6.43{\rm E}-05    &    1.90{\rm E}-04&   
1.85{\rm E}-04    &    1.31{\rm E}-05&    6.03{\rm E}-07 \\ 
   95  &   9.28{\rm E}-06    &    9.28{\rm E}-06   &   9.28{\rm E}-06    &    1.02{\rm E}-05&   
4.78{\rm E}-05    &    1.04{\rm E}-05&    5.61{\rm E}-07 \\ 
   96  &   9.00{\rm E}-06    &    9.00{\rm E}-06   &   9.00{\rm E}-06    &    1.38{\rm E}-05&   
4.59{\rm E}-05    &    1.04{\rm E}-05&    5.61{\rm E}-07
 \end{array}                                         
\]
\[
\begin{array}{l|c |c|c |c|c |c |c }
 N\setminus k    &   0 &1&   10 &   10^2 & 10^3 &10^4&10^5 \\ 
\hline&&&&&&&\\
11  &   8.62{\rm E}-06    &    8.66{\rm E}-06   &   4.05{\rm E}-05    &    3.66{\rm E}-05&   
1.02{\rm E}-06    &    1.65{\rm E}-08&    2.20{\rm E}-10 \\ 
   12  &   5.19{\rm E}-06    &    5.21{\rm E}-06   &   2.07{\rm E}-05    &    3.09{\rm E}-05&   
9.05{\rm E}-07    &    1.50{\rm E}-08&    2.01{\rm E}-10 \\ 
   23  &   1.31{\rm E}-07    &    1.31{\rm E}-07   &   1.34{\rm E}-07    &    6.06{\rm E}-06&   
3.56{\rm E}-07    &    7.13{\rm E}-09&    1.02{\rm E}-10 \\ 
   24  &   1.01{\rm E}-07    &    1.01{\rm E}-07   &   1.04{\rm E}-07    &    5.30{\rm E}-06&   
3.32{\rm E}-07    &    6.78{\rm E}-09&    9.74{\rm E}-11 \\ 
   47  &   1.52{\rm E}-09    &    1.52{\rm E}-09   &   1.55{\rm E}-09    &    2.96{\rm E}-07&   
9.38{\rm E}-08    &    2.87{\rm E}-09&    4.68{\rm E}-11 \\ 
   48  &   1.30{\rm E}-09    &    1.30{\rm E}-09   &   1.33{\rm E}-09    &    2.65{\rm E}-07&   
8.95{\rm E}-08    &    2.78{\rm E}-09&    4.57{\rm E}-11 \\ 
   95  &   1.64{\rm E}-11    &    1.64{\rm E}-11   &   1.67{\rm E}-11    &    2.68{\rm E}-09&   
1.23{\rm E}-08    &    9.59{\rm E}-10&    2.04{\rm E}-11 \\ 
   96  &   1.65{\rm E}-11    &    1.65{\rm E}-11   &   1.67{\rm E}-11    &    1.76{\rm E}-09&   
1.18{\rm E}-08    &    9.41{\rm E}-10&    2.02{\rm E}-11 
\end{array}                                        
\]
\caption{\label{Tab:06:02}Errors of the quadrature rule for integral \eqref{eq:exp:06a} with 
$\alpha=-1$, $\beta=1/2$ (top) and $\beta=3/2$ (bottom)}
\end{table}

\begin{table}[p]\small
\[
\begin{array}{l|c |c|c |c|c |c |c }
 N\setminus k    &   0 &1&   10 &   10^2 & 10^3 &10^4&10^5 \\ 
\hline&&&&&&&\\
 11  &   2.13{\rm E}-02    &    2.15{\rm E}-02   &   5.10{\rm E}-02    &    2.96{\rm E}-03&   
8.24{\rm E}-05    &    1.31{\rm E}-05&    1.27{\rm E}-06 \\ 
   12  &   5.71{\rm E}-02    &    5.72{\rm E}-02   &   1.21{\rm E}-01    &    1.63{\rm E}-03&   
5.31{\rm E}-05    &    1.69{\rm E}-06&    5.45{\rm E}-08 \\ 
   23  &   5.85{\rm E}-03    &    5.86{\rm E}-03   &   6.49{\rm E}-03    &    2.10{\rm E}-03&   
2.81{\rm E}-05    &    4.38{\rm E}-06&    3.79{\rm E}-07 \\ 
   24  &   2.15{\rm E}-02    &    2.15{\rm E}-02   &   2.49{\rm E}-02    &    1.66{\rm E}-03&   
5.36{\rm E}-05    &    1.71{\rm E}-06&    5.46{\rm E}-08 \\ 
   47  &   1.83{\rm E}-03    &    1.83{\rm E}-03   &   1.78{\rm E}-03    &    1.87{\rm E}-03&   
4.51{\rm E}-05    &    2.24{\rm E}-06&    1.28{\rm E}-07 \\ 
   48  &   7.73{\rm E}-03    &    7.73{\rm E}-03   &   8.01{\rm E}-03    &    1.59{\rm E}-03&   
5.39{\rm E}-05    &    1.72{\rm E}-06&    5.47{\rm E}-08 \\ 
   95  &   6.11{\rm E}-04    &    6.11{\rm E}-04   &   5.92{\rm E}-04    &    1.98{\rm E}-03&   
5.17{\rm E}-05    &    1.84{\rm E}-06&    6.65{\rm E}-08 \\ 
   96  &   2.75{\rm E}-03    &    2.75{\rm E}-03   &   2.77{\rm E}-03    &    6.56{\rm E}-03&   
5.42{\rm E}-05    &    1.72{\rm E}-06&    5.48{\rm E}-08 
\end{array}                                         
\]
\[
\begin{array}{l|c |c|c |c|c |c |c }
 N\setminus k    &   0 &1&   10 &   10^2 & 10^3 &10^4&10^5 \\ 
\hline&&&&&&&\\
 11  &   2.08{\rm E}-03    &    2.14{\rm E}-03   &   6.62{\rm E}-03    &    2.23{\rm E}-04&   
2.01{\rm E}-05    &    2.02{\rm E}-06&    2.02{\rm E}-07 \\ 
   12  &   1.39{\rm E}-03    &    1.40{\rm E}-03   &   5.83{\rm E}-03    &    3.02{\rm E}-05&   
2.47{\rm E}-07    &    2.47{\rm E}-09&    2.65{\rm E}-11 \\ 
   23  &   2.20{\rm E}-04    &    2.21{\rm E}-04   &   2.96{\rm E}-04    &    5.41{\rm E}-05&   
2.79{\rm E}-06    &    2.88{\rm E}-07&    2.88{\rm E}-08 \\ 
   24  &   2.87{\rm E}-04    &    2.87{\rm E}-04   &   4.17{\rm E}-04    &    2.47{\rm E}-05&   
1.02{\rm E}-07    &    8.26{\rm E}-10&    1.00{\rm E}-11 \\ 
   47  &   2.88{\rm E}-05    &    2.89{\rm E}-05   &   2.91{\rm E}-05    &    3.11{\rm E}-05&   
3.53{\rm E}-07    &    4.29{\rm E}-08&    4.28{\rm E}-09 \\ 
   48  &   5.32{\rm E}-05    &    5.32{\rm E}-05   &   5.87{\rm E}-05    &    2.34{\rm E}-05&   
7.33{\rm E}-08    &    1.45{\rm E}-10&    3.15{\rm E}-12 \\ 
   95  &   4.40{\rm E}-06    &    4.40{\rm E}-06   &   4.17{\rm E}-06    &    2.90{\rm E}-05&   
3.98{\rm E}-08    &    6.83{\rm E}-09&    6.79{\rm E}-10 \\ 
   96  &   9.51{\rm E}-06    &    9.51{\rm E}-06   &   9.76{\rm E}-06    &    5.08{\rm E}-05&   
7.70{\rm E}-08    &    1.22{\rm E}-10&    5.94{\rm E}-13 
\end{array}                                        
\]
\caption{\label{Tab:06:03}Errors of the quadrature rule for integral \eqref{eq:exp:06b} with 
$\alpha=0$, $\beta=1/2$ (top) and $\beta=3/2$ (bottom)}

 \[
\begin{array}{l|c |c|c |c|c |c |c } 
  N\setminus k    &   0 &1&   10 &   10^2 & 10^3 &10^4&10^5 \\ 
 \hline&&&&&&&\\                
 11  &   1.74{\rm E}-02    &    1.75{\rm E}-02   &   2.53{\rm E}-02    &    1.63{\rm E}-03&   
5.63{\rm E}-05    &    1.73{\rm E}-06&    5.49{\rm E}-08 \\ 
   12  &   6.28{\rm E}-02    &    6.29{\rm E}-02   &   5.65{\rm E}-02    &    1.41{\rm E}-03&   
6.94{\rm E}-05    &    1.67{\rm E}-06&    5.46{\rm E}-08 \\ 
   23  &   5.34{\rm E}-03    &    5.34{\rm E}-03   &   6.02{\rm E}-03    &    1.92{\rm E}-03&   
5.60{\rm E}-05    &    1.73{\rm E}-06&    5.49{\rm E}-08 \\ 
   24  &   2.20{\rm E}-02    &    2.20{\rm E}-02   &   2.26{\rm E}-02    &    2.79{\rm E}-03&   
6.28{\rm E}-05    &    1.68{\rm E}-06&    5.47{\rm E}-08 \\ 
   47  &   1.76{\rm E}-03    &    1.76{\rm E}-03   &   1.81{\rm E}-03    &    1.59{\rm E}-03&   
5.40{\rm E}-05    &    1.73{\rm E}-06&    5.49{\rm E}-08 \\ 
   48  &   7.78{\rm E}-03    &    7.78{\rm E}-03   &   7.83{\rm E}-03    &    8.18{\rm E}-04&   
5.07{\rm E}-05    &    1.68{\rm E}-06&    5.47{\rm E}-08 \\ 
   95  &   6.01{\rm E}-04    &    6.01{\rm E}-04   &   6.06{\rm E}-04    &    1.30{\rm E}-03&   
5.68{\rm E}-05    &    1.72{\rm E}-06&    5.49{\rm E}-08 \\ 
   96  &   2.75{\rm E}-03    &    2.75{\rm E}-03   &   2.75{\rm E}-03    &    3.19{\rm E}-03&   
6.43{\rm E}-05    &    1.66{\rm E}-06&    5.47{\rm E}-08 
 \end{array}                                          
 \] 
 \[ 
 \begin{array}{l|c |c|c |c|c |c |c } 
  N\setminus k    &   0 &1&   10 &   10^2 & 10^3 &10^4&10^5 \\ 
 \hline&&&&&&&\\
 
   11  &   1.10{\rm E}-03    &    1.12{\rm E}-03   &   2.36{\rm E}-03    &    1.59{\rm E}-05&   
3.03{\rm E}-07    &    3.58{\rm E}-09&    4.59{\rm E}-11 \\ 
   12  &   1.79{\rm E}-03    &    1.80{\rm E}-03   &   1.97{\rm E}-03    &    7.04{\rm E}-05&   
1.36{\rm E}-06    &    1.66{\rm E}-08&    2.09{\rm E}-10 \\ 
   23  &   1.54{\rm E}-04    &    1.55{\rm E}-04   &   2.07{\rm E}-04    &    3.87{\rm E}-05&   
1.95{\rm E}-07    &    1.97{\rm E}-09&    2.56{\rm E}-11 \\ 
   24  &   3.08{\rm E}-04    &    3.09{\rm E}-04   &   3.45{\rm E}-04    &    6.30{\rm E}-05&   
5.34{\rm E}-07    &    6.14{\rm E}-09&    7.78{\rm E}-11 \\ 
   47  &   2.44{\rm E}-05    &    2.44{\rm E}-05   &   2.64{\rm E}-05    &    2.15{\rm E}-05&   
7.43{\rm E}-08    &    7.59{\rm E}-10&    1.05{\rm E}-11 \\ 
   48  &   5.41{\rm E}-05    &    5.42{\rm E}-05   &   5.57{\rm E}-05    &    1.20{\rm E}-05&   
1.54{\rm E}-07    &    2.14{\rm E}-09&    2.79{\rm E}-11 \\ 
   95  &   4.07{\rm E}-06    &    4.08{\rm E}-06   &   4.16{\rm E}-06    &    1.75{\rm E}-05&   
1.16{\rm E}-07    &    2.00{\rm E}-10&    3.91{\rm E}-12 \\ 
   96  &   9.56{\rm E}-06    &    9.56{\rm E}-06   &   9.62{\rm E}-06    &    1.90{\rm E}-05&   
1.68{\rm E}-07    &    6.33{\rm E}-10&    9.81{\rm E}-12 
\end{array}                  
 \]
 \caption{\label{Tab:06:04}Errors of the quadrature rule for integral \eqref{eq:exp:06b} with 
 $\alpha=-1$, $\beta=1/2$ (top) and $\beta=3/2$ (bottom)}
 \end{table}

\appendix

\section{Some relevant properties for Chebyshev polynomials}

For the sake of completeness we present in this section those properties of
Chebyshev polynomials we have used in this work.  These results  can be
found in many classical text books on special
functions or Chebyshev polynomials (see for instance \cite[Ch. 22]{AbrSt} or 
\cite{Riv:1990}).

From the definitions of the Chebyshev polynomials of first and second kind we
have the relations
\begin{equation}
 \label{eq:def:Tn:Un}
 T_n(\cos\theta)=\cos n\theta,\qquad U_{n}(\cos\theta)=
\frac{1}{n+1}T_{{n +1}}'(\cos\theta)= \frac{\sin
(n+1)\theta}{\sin\theta},
\end{equation}
As a byproduct, one can deduce that if $n$ is even (respectively odd), so are 
$T_n$
and $U_n$. Note that as usual in this work, we have taken $U_{-1}=0$,
which is also consistent with   \eqref{eq:def:Tn:Un}. 
Both families of  polynomials obey the  recurrence
relation
\begin{equation}
\label{eq:TnRecc}
P_{n+1}(x) = 2x P_{n}(x)-P_{n-1}(x)
\end{equation}
but with, obviously, different starting values, simply $T_0(x)= 1$, $T_1(x)=x$
and
$U_{-1}(x)= 0$, $U_0(x)=1$ respectively. 


From \eqref{eq:def:Tn:Un} we easily deduce 
\[
T_n'(\cos\theta)=n \frac{\sin
n\theta}{\sin\theta},\qquad \sin\theta \,T_n''(\cos\theta)=-n\frac{\rm d}{\rm
d\theta}\bigg(\frac{\sin n\theta}{\sin \theta}\bigg).
\] 
Therefore, 
\begin{equation}
\label{eq:Tnbounded}
\|T_n\|_{{L_\infty(-1,1)}} \le T_n(1)= 1,\qquad
\|U_n\|_{{L_\infty(-1,1)}}=\frac{1}{n+1}\|T_{n+1}'\|_{L_\infty(-1,1)}=n+1
\end{equation}
and  (recall that $w(x)=\sqrt{1-x^2} $)
\begin{equation}
\label{eq:Tnbounded:02}
\|wT'_{n+1}\|_{{L_\infty(-1,1)}}=(n+1),\quad
\|w
T''_{n+1}\|_{{L_\infty(-1,1)}}\le
 C (n+1)^3
\end{equation}
where $C$ is independent of $n$.

Unlike $T_n$, $U_n$ is not uniformly bounded in $n$ and $x\in [-1,1]$. However, 
\begin{equation}
\label{eq:UnIntegral}
\|U_n\|_{1,\omega}^2=\int_{-1}^1 |U_n(x)|^2\,\sqrt{1-x^2}\,{\rm
d}x=\int_0^\pi \sin^2n \theta\,\mathrm{d}\theta=  \frac{\pi}{2} . 
\end{equation}

On the other hand, 
\begin{equation}
\label{eq:intTn}
 \int_{-1}^1 T_{n}(x)\,{\rm d}x=\int_0^\pi \cos n\theta \sin\theta\,{\rm
d}\theta=\left\{\begin{array}{ll}
-\frac{2}{n^2-1},\quad& \text{if $n$ is even},\\
0,& \text{otherwise}.
 \end{array}\right.
\end{equation}
The trigonometric identity
\[
\cos n\theta\  {\sin
(m+1)\theta} =\frac{1}{2}\big( {\sin(m+n+1)\theta}
+ {\sin(m+1-n)\theta} \big)
\]
implies 
\begin{equation}
 \label{eq:ProdTnUm}
T_n U_m=\left\{\begin{array}{ll} 
                  \frac12\displaystyle \big(U_{ m+n }+U_{  m-n } \big),\quad
&\text{if $m\ge n-1$},\\[1.5ex]
  \frac12\displaystyle \big(U_{m+n}-U_{n-m-2} \big),\quad
&\text{if $m\le n-2$}.
                \end{array}
\right.
\end{equation}
In particular, we obtain for $n\ge 1$
\begin{eqnarray}
 2x T_n'(x)&=&2  n T_1(x)
U_{n-1}(x)=n\big[U_n(x)+U_{n-2}(x)\big], \\
 T_n(x)&=&T_n(x)U_0(x)=\frac{1}{2}\big[U_n(x)-U_{n-2}(x)\big].\label{eq:UnTn:02}
\end{eqnarray}  

Finally, it holds
\begin{equation}
 \frac{T_n(x)-T_{n}(y)}{x-y}=
2\sum_{j=0}^{n-2} U_j(x) T_{n-1-j}(y)+U_{n-1}(x)=
2\sum_{j=0}^{n-2} U_j(y) T_{n-1-j}(x)+U_{n-1}(y)\quad
\label{eq:Tn-Tn}
\end{equation}
which can be easily proven by induction on $n$.

\subsection*{Acknowledgements}

The author is supported partially by Project  MTM2010-21037. The author wants 
to thank Prof. Ivan Graham for several useful discussions which help to improve 
both the quality and readability of this paper. 
\bibliography{biblio.bib}

\end{document}